\newtheorem{theorem}{Theorem}%
\newtheorem{proposition}[theorem]{Proposition}%
\newtheorem{example}{Example}%
\newtheorem{lemma}[theorem]{Lemma}%
\newtheorem{definition}{Definition}
\numberwithin{equation}{section}
\begin{document}
\global\long\def\b#1{\left(#1\right)}%

\global\long\def\p{\phi}%

\global\long\def\abs#1{\left|#1\right|}%

\global\long\def\oiin{\oiint}%

\global\long\def\d{\text{d}}%

\global\long\def\curl{\nabla\times}%

\global\long\def\div{\nabla\cdot}%

\global\long\def\pd#1#2{\frac{\partial#1}{\partial#2}}%

\global\long\def\eval{\biggr|}%

\global\long\def\evals{\big|}%

\global\long\def\sb#1{\left[#1\right]}%

\global\long\def\cb#1{\left\{  #1\right\}  }%

\global\long\def\lc{\varepsilon}%

\global\long\def\R{\mathbb{R}}%

\global\long\def\C{\mathbb{C}}%

\global\long\def\qedd{\hfill\blacksquare}%

\global\long\def\code#1{\mathtt{#1}}%

\global\long\def\N{\mathbb{N}}%

\global\long\def\K{\mathbb{K}}%

\global\long\def\O{\mathcal{O}}%

\global\long\def\lb#1{\left(#1\right.}%

\global\long\def\rb#1{\left.#1\right)}%

\global\long\def\ss{\subseteq}%

\global\long\def\norm#1{\left\Vert #1\right\Vert }%

\global\long\def\ip#1{\left\langle #1\right\rangle }%

\global\long\def\B{\mathcal{B}}%

\global\long\def\ra{\mathcal{R}}%

\global\long\def\D{\mathcal{D}}%

\global\long\def\vp{\varphi}%

\global\long\def\sff#1#2{\left\langle #1,#2\right\rangle _{\text{II}}}%

\global\long\def\W{\mathcal{W}}%

\global\long\def\F{\mathcal{F}}%

\global\long\def\C{\mathbb{C}}%

\global\long\def\H{\mathcal{H}}%

\global\long\def\L{\mathcal{L}}%

\global\long\def\G{\mathcal{G}}%

\global\long\def\vc#1{\boldsymbol{#1}}%

\global\long\def\x{\vc x}%

\title[Meshfree Eigenvalues on Manifolds]{A Meshfree Method for Eigenvalues of Differential Operators
on Surfaces, Including Steklov Problems}

\author{Daniel R. Venn* and Steven J. Ruuth}
\address{Mathematics, Simon Fraser University, 8888 University Drive, Burnaby, V5A 1S6, BC, Canada}
\email{dvenn@sfu.ca}

\begin{abstract}We present and study techniques for investigating the spectra of linear
differential operators on surfaces and flat domains using symmetric
meshfree methods: meshfree methods that arise from finding norm-minimizing
Hermite--Birkhoff interpolants in a Hilbert space. Meshfree methods
are desirable for surface problems due to the increased difficulties associated with mesh creation and refinement on curved surfaces. 
While meshfree methods have been used for
solving a wide range of partial differential equations (PDEs) in recent years, the
spectra of operators discretized using radial basis functions (RBFs)
often suffer from the presence of non-physical eigenvalues (spurious
modes).
This makes many RBF methods unhelpful for eigenvalue problems.
We provide rigorously justified processes for finding eigenvalues
based on results concerning the norm of the solution in its native space;
specifically, only PDEs with solutions in the native space produce
numerical solutions with bounded norms as the fill distance approaches
zero. For certain problems, we prove that eigenvalue and eigenfunction
estimates converge at a high-order rate. The technique we present
is general enough to work for a wide variety of problems, including
Steklov problems, where the eigenvalue parameter is in the boundary
condition. Numerical experiments for a mix of standard and Steklov
eigenproblems on surfaces with and without boundary, as well as flat
domains, are presented, including a Steklov--Helmholtz problem.
\end{abstract}

\keywords{meshfree methods, eigenvalues, numerical analysis, radial basis functions, Steklov, spectral geometry}

\maketitle

\section{Introduction}

Meshfree methods are a class of numerical methods for differential
equations that differ from more traditional approaches, such as finite
elements, by not requiring the points used for computations to be organized.
Neighbours of points do not need to be specified, and the domain does
not need to be divided into simpler shapes, such as triangles. Instead,
all that is required is a sample of points in the domain of interest.
This is beneficial primarily because it is far easier to sample scattered points
in a domain than it is to form or refine a structured mesh of
the domain. Meshing can be particularly challenging on surface domains, especially those defined implicitly,
and it may be difficult to refine an existing mesh.
Point cloud generation, by contrast, is often straightforward, even for implicitly defined surfaces. Therefore, there
has been recent interest in developing and improving upon meshfree
methods for surface partial differential equations (PDEs), which appear in various applications, particularly
in image processing \cite{biddl13} and computer graphics \cite{auer12}.

For problems that simply require solving a well-posed PDE, a range
of successful, analyzed meshfree methods exist, with Hermite Radial
Basis Functions (RBFs) (see, for example, \cite{frank98,sun94}) among the best understood. Recent work has
also focused on understanding non-symmetric, least squares methods
using RBFs, including on surfaces \cite{chen20}. However, there are
a variety of problems in numerical analysis outside of solving PDEs.
Eigenvalue problems are particularly notable examples. For such problems,
literature on meshfree methods is more sparse, in part because it
is well known that common approaches to discretizing differential
operators using RBFs can produce incorrect, extra eigenvalues (spurious
modes). Discussions regarding spurious modes for RBFs
can typically be found in papers that focus on the stabilization of time-stepping
schemes \cite{fornb11,yan23}. Recently, Harlim et al. analyzed and
tested an RBF formulation that relies on Monte Carlo estimates of
surface integrals to produce a symmetric discretized Laplace--Beltrami
operator. The authors proved that this approach yields $\O\b{\tilde{N}^{-\frac{1}{2}}}$
convergence of the spectrum with high probability (see Theorem 4.1
of \cite{harli23}), provided the point cloud sampling density is known.

In this paper, we develop theory that can be used to produce reliable,
high-order, and flexible techniques for finding eigenvalues using
a range of meshfree methods. Specifically, we analyze methods that
can be developed from searching for a norm-minimizing Hermite--Birkhoff
interpolant in a Hilbert space. We show that the same method that
can be used for simple, 2D Laplacian eigenvalue computations can also
be used for Steklov and surface eigenvalue problems, all under the
same theoretical framework. In \cite{chand18}, the authors note that ``feasibility implies
convergence'' for these norm-minimizing methods. Proposition \ref{prop:Let--be} expands on this by showing that ``boundedness implies feasibility and convergence'', and it also shows a stronger form of convergence: convergence in the Hilbert space norm. For suitable choices of space, this implies uniform convergence of the solution and its derivatives up to a certain order. This result applies in the setting of PDEs on manifolds with meshfree methods minimizing
a norm in a Hilbert space, such as in symmetric RBF methods \cite{frank98,sun94}
and minimum Sobolev norm methods \cite{chand15,chand18}. This allows us to examine
boundedness to determine feasibility, which we use to develop
eigenvalue methods. We previously explored this approach in \cite{venn}, specifically for underdetermined Fourier extensions. In this paper, we significantly expand the theory behind the method, work in a more general setting so that the theoretical results apply to Hermite RBFs as well, and apply the method to a much wider range of problems.

For certain problems, we prove novel statements regarding convergence rates (Propositions \ref{prop:If-all-assumptions} and \ref{prop:If-all-assumptions-1} in Section \ref{sec:analysis}). More precisely, we show that our eigenvalue and eigenfunction estimates converge at arbitrarily high-order rates depending on the choice of Hilbert space, given that true eigenfunctions can be extended to sufficiently smooth functions in a larger domain.

In Section \ref{sec:Application-to-Eigenvalue}, we demonstrate the generality of our approach through
new numerical tests for a range of problems that may be difficult with existing
high-order methods. First, we expand on the test from \cite{venn} by estimating larger Laplace--Beltrami
eigenvalues. Then, we find Laplace--Beltrami eigenvalues on an implicitly-defined surface, Laplace eigenvalues on an irregular 2D domain with a hole, Steklov eigenvalues in both flat domains and on a surface
with boundary, Schrödinger--Steklov eigenvalues \cite{quino18}, and
Steklov--Helmholtz eigenvalues for a problem that produces singular
matrices with standard methods. Through our numerical tests, we demonstrate that the method is high-order, can be applied to implicit surfaces and other domains that are challenging to mesh, and requires minimal modification between problems. We conclude by summarizing our analysis contributions and numerical results in Section \ref{sec:conclude}.

\section{Background}\label{sec:bg}

\subsection{Functional Analysis Background\label{subsec:Functional-Analysis-Background}}

Certain meshfree methods, including Hermite RBFs \cite{frank98,sun94}
and minimum Sobolev norm interpolation \cite{chand15,chand18}, can
be analyzed as norm-minimizing interpolants in a Hilbert space. Our
analysis includes each of these methods, so we introduce
them in their most general formulation in this subsection.

Specifically, we consider norm-minimizing Hermite--Birkhoff interpolants:
functions that interpolate both function and derivative data. Let
$\H$ be a Hilbert space of functions on a domain $\Omega$. Let $\cb{\x_{j}}_{j=1}^{\tilde{N}}\subset\Omega$
be a collection of $\tilde{N}$ points. Let $\cb{\F_{j}}_{j=1}^{\tilde{N}}$ be
a collection of non-zero linear differential operators such that $\F_{j}$
is defined on a neighbourhood of $\x_{j}$. Define the evaluation
operator $\L:\H\to\C^{\tilde{N}}$ so that
\[
\b{\L u}_{j}:=\b{\F_{j}u}\b{\x_{j}}.
\]
We will end up requiring that $\L$ must be a bounded linear operator
(and that $\b{\F_{j}u}\b{\x_{j}}$ must have a uniquely defined value). This imposes a restriction on our choice of $\H$. More explicitly, we need constants $C_j>0$ such that $\abs{\F_j u\b \x_j}\le C_j\norm{u}_\H$ for all $u\in\H$. As a concrete example, consider $H^1\b{0,1}$. 
\\
\begin{example}
\label{exa:h1}If $\Omega=\b{0,1}$, $\H=H^{1}\b{0,1}$, and $\F_{j}$
are identity operators, then $\L$ is bounded. 

First note that $H^{1}$ functions on the interval $\b{0,1}$ can be uniquely
associated with a continuous function on $\sb{0,1}$ (see Exercise
5 in Chapter 5 of Evans \cite{evans10}, then note absolutely continuous
functions on $\b{0,1}$ have a unique continuous extension to $\sb{0,1}$)
so function evaluation can be well-defined. Let the minimum of $\abs u$
occur at $x_{*}\in\sb{0,1}$, then
\begin{align*}
\abs{u\b x} & =\abs{u\b{x_{*}}+\int_{x_{*}}^{x}u'\b t\,\d t}\\
 & \le\int_{0}^{1}\abs{u\b t}\,\d t+\int_{0}^{1}\abs{u'\b t}\,\d t\\
 & =\b{1,\abs u}_{L^{2}\b{0,1}}+\b{1,\abs{u'}}_{L^{2}\b{0,1}}\\
 & \le\norm u_{L^{2}\b{0,1}}+\norm{u'}_{L^{2}\b{0,1}}\text{, by the Cauchy-Schwarz inequality}\\
 & \le\sqrt{2}\norm u_{H^{1}\b{0,1}}.
\end{align*}
Then,
\[
\norm{\L u}_{2}\le\sqrt{2\tilde{N}}\norm u_{H^{1}\b{0,1}}.
\]
So $\L$ is bounded. $\hfill\qed$

\end{example}
~

A similar argument shows that if $u\in H^{p+1}\b{0,1}$, then $u$ can be uniquely associated with a $C^p$ function on $[0,1]$ and $\abs{\frac{\d^p}{\d x^p}u}\le\sqrt{2}\norm{u}_{H^{p+1}(0,1)}$. Then, as long as each $\F_j$ has order at most $p$, $\L:H^{p+1}(0,1)\to\C^{\tilde{N}}$ is bounded. Note that this is not the case in higher dimensions. For example, $H^1(\R^2)$ functions are not necessarily almost everywhere equal to a continuous function, so function evaluation cannot be well-defined everywhere, and $u\b\x$ cannot be bounded by a multiple of $\norm{u}_{H^1(\R^2)}$.

The setup for a variety of symmetric meshfree methods can be
summarized by the optimization problem:
\begin{align}
\text{minimize, over }u\in\H: & \,\norm u_{\H},\label{eq:opt1}\\
\text{subject to } & \,\L u=\vc f.\nonumber 
\end{align}
As long as $\H$ includes all compactly supported smooth functions,
all algebraic polynomials, all (possibly scaled) trigonometric polynomials,
or a variety of other classes of functions such that it is simple
to find one $u\in\H$ such that $\L u=\vc f$, then this problem has
a unique norm-minimizing solution $\tilde{u}$ to problem
(\ref{eq:opt1}) when $\L$ is bounded. This is since the constraint
set $\cb{u\in\H:\L u=\vc f}$ is closed when $\L$ is a bounded linear
operator. The norm-minimizing solution is orthogonal to the
kernel of $\L$; $u\in\mathcal{N}\b{\L}^{\perp}$. A key observation
is that $\L$ has a (bounded) adjoint operator $\L^{*}$ and that
$\mathcal{N}\b{\mathcal{L}}^{\perp}=\overline{\mathcal{R}\b{\mathcal{L}^{*}}}=\mathcal{R}\b{\mathcal{L}^{*}}$,
since $\mathcal{R}\b{\mathcal{L}^{*}}$ is finite dimensional ($\L^{*}:\C^{\tilde{N}}\to\H$,
so $\dim\b{\mathcal{R}\b{\mathcal{L}^{*}}}$ is at most $\tilde{N}$).
Therefore,
\begin{equation}
\tilde{u}\in\mathcal{R}\b{\mathcal{L}^{*}}=\text{span}\cb{\L^{*}\hat{\vc e}_{j}}_{j=1}^{\tilde{N}},\label{eq:finspan}
\end{equation}
where $\hat{\vc e}_{j}$ are the standard basis vectors for $\C^{\tilde{N}}$.
For any $\vc f\in\C^{\tilde{N}}$, there is therefore some $\vc{\beta}\in\C^{\tilde{N}}$
such that
\begin{align}
\L\L^{*}\vc{\beta}=\vc f,\label{eq:kernel}
\end{align}
and furthermore, this $\vc{\beta}$ is unique since the system is
square. It is then clear that $\L\L^{*}$ is self-adjoint and positive
definite. Finally, $\L^{*}\vc{\beta}$ is the solution to problem
(\ref{eq:opt1}).

\subsection{The Square System and RBFs}

An alternative view, common in RBF literature, is to start with a
set of functions $\cb{\psi_{j}}_{j=1}^{\tilde{N}}$  dependent on the
location of the points $\cb{\x_{j}}_{j=1}^{\tilde{N}}$. In the case
of RBF interpolation (without derivatives), it can be shown that many
standard choices of RBFs correspond to constrained norm-minimization
in a certain Hilbert space, typically called the native space (see
Theorem 13.2 of \cite{wendl04}). That is, the RBFs $\cb{\psi_{j}}_{j=1}^{\tilde{N}}$,
which are simply identical (up to translation) radially symmetric
functions centred at the points $\cb{\x_{j}}_{j=1}^{\tilde{N}}$,
are the functions $\cb{\L^{*}\hat{\vc e}_{j}}_{j=1}^{\tilde{N}}$
from the previous subsection. So, RBF interpolation can instead be
viewed as a highly underdetermined problem; the constraint set is
typically infinite-dimensional. RBFs simply select the optimal interpolant
by minimizing $\norm u_{\H}$ subject to the constraint. When derivative
interpolation conditions are included, the functions $\cb{\L^{*}\hat{\vc e}_{j}}_{j=1}^{\tilde{N}}$
instead correspond to Hermite RBFs $\cb{\psi_{j}}_{j=1}^{\tilde{N}}$, which are no longer radial but are related to the derivatives of the
usual RBFs \cite{frank98,sun94} (specifically, they
are $\F_{j}^{*}\phi\b{\x-\x_{j}}$ if $\phi$ is an RBF).

Constructing $\cb{\psi_{j}}_{j=1}^{\tilde{N}}$ can be advantageous,
as it turns an infinite-dimensional optimization problem into an $\tilde{N}\times\tilde{N}$
linear system ($\L\L^{*}\vc{\beta}=\vc f$). Numerically, however,
there are drawbacks. The condition number of $\L\L^{*}$ is large;
it is the square of the condition number of the original optimization
problem. There is therefore some motivation to solve or approximately
solve the optimization problem directly.

\subsection{Direct Solutions\label{subsec:Direct-Solutions}}

Of course, problem (\ref{eq:opt1}) cannot truly be solved directly
(that is, without forming the linear system $\L\L^{*}\vc{\beta}=\vc f$),
as it is an infinite-dimensional problem. However, if $\H$ is separable, then
we have an opportunity to reformulate the problem as an $\tilde{N}\times N_{b}$
underdetermined problem, for some number of basis functions $N_{b}>\tilde{N}$.
That is, suppose for some orthonormal basis $\cb{\phi_{n}}_{n=1}^{\infty}$
for $\H$, we can write
\[
\H=\cb{\sum_{n=1}^{\infty}c_{n}\phi_{n}:c_{n}\in\C\text{, for each \ensuremath{n\in\N}}}.
\]
Solving problem (\ref{eq:opt1}) more directly using a separable Hilbert
space is the approach taken in minimum Sobolev norm interpolation
\cite{chand18,chand15}. To do this, it is helpful to construct $\L$ in terms of $\cb{\phi_{n}}_{n=1}^{\infty}$
first. Recall that $\L$ is a bounded operator by assumption; $\norm{\L u}_{2}\le\norm{\L}\norm u_{\H}$ for any $u\in\H$.
In this case, each $\F_{j}\eval_{\x_{j}}$ must also be a bounded
operator from $\H\to\C$. Also note that if $u=\sum_{n=1}^{\infty}c_{n}\phi_{n}$,
then $\norm u_{\H}=\norm c_{\ell^{2}}$, since the $\phi_{n}$ functions
are assumed to be orthonormal. So, for each $c\in\ell^{2}$,
\[
\F_{j}\b{\sum_{n=1}^{\infty}c_{n}\phi_{n}\b{\x_{j}}}\le\norm{\L}\norm c_{\ell^{2}}.
\]
A short argument from the Riesz representation theorem shows $\cb{\F_{j}\phi_{n}\b{\x_{j}}}_{n=1}^{\infty}\in\ell^{2}$
with $\norm{\cb{\F_{j}\phi_{n}\b{\x_{j}}}_{n=1}^{\infty}}_{\ell^{2}}\le\norm{\L}$.
Furthermore,
\[
\sum_{n=N_{b}+1}^{\infty}\abs{c_{n}\b{\F_{j}\phi_{n}}\b{\x_{j}}}\le\sqrt{\b{\sum_{n=N_{b}+1}^{\infty}\abs{c_{n}}^{2}}\b{\sum_{n=N_{b}+1}^{\infty}\abs{\b{\F_{j}\phi_{n}}\b{\x_{j}}}^{2}}}\to0,
\]
as $N_{b}\to\infty$.
So, the sequence converges absolutely. Also, note that
\begin{align*}
\L & :\sum_{n=1}^{\infty}c_{n}\phi_{n}\mapsto\b{\sum_{n=1}^{\infty}c_{n}\b{\F_{j}\phi_{n}}\b{\x_{j}}}_{j=1}^{\tilde{N}},\\
\L^{*} & :\vc{\beta}\mapsto\sum_{n=1}^{\infty}\b{\sum_{j=1}^{\tilde{N}}\beta_{j}\b{\F_{j}^{*}\phi_{n}}\b{\x_{j}}}\phi_{n},\\
\L\L^{*} & :\vc{\beta}\mapsto\b{\sum_{n=1}^{\infty}\b{\sum_{j=1}^{\tilde{N}}\beta_{j}\b{\F_{j}^{*}\phi_{n}}\b{\x_{j}}}\b{\F_{k}\phi_{n}}\b{\x_{k}}}_{k=1}^{\tilde{N}}.
\end{align*}
We can then define a matrix $\vc{\Phi}$ associated with $\L\L^{*}$
by
\[
\Phi_{jk}:=\sum_{n=1}^{\infty}\b{\F_{j}^{*}\phi_{n}}\b{\x_{j}}\b{\F_{k}\phi_{n}}\b{\x_{k}}.
\]
This matrix is clearly self-adjoint when written in this
form. From the discussion at the end of Subsection \ref{subsec:Functional-Analysis-Background},
we also know that $\L\L^{*}$, and therefore $\vc{\Phi}$, is positive
definite as long as there is always at least one $u\in\H$ such that
$\L u=\vc f$ for any $\vc f\in\C^{\tilde{N}}$.

We may now consider the Hilbert space with a truncated basis:
\begin{align}\label{eq:Hsep}
\H_{N_{b}}:=\cb{\sum_{n=1}^{N_{b}}c_{n}\phi_{n}:c_{n}\in\C\text{ for each }n\in\cb{1,2,\ldots,N_{b}}}\subset\H.
\end{align}
The first question we may ask is whether we are still able to solve
$\L u=\vc f$ in $\H_{N_{b}}$. The next proposition tells us there
is always some finite $N_{b}$ such that a solution exists.
\\
\begin{proposition}
Assume $\H,\L$ are as defined in Subsection \ref{subsec:Functional-Analysis-Background}
and $\H_{N_{b}}$ is as defined as in Eq. \ref{eq:Hsep}. Let $\L_{N_{b}}:=\L\eval_{\H_{N_{b}}}$ be
bounded. If $\mathcal{R}\b{\L}=\C^{\tilde{N}}$, then there exists
some finite $M\in\N$ such that $\mathcal{R}\b{\L_{N_{b}}}=\C^{\tilde{N}}$ for all $N_{b}\ge M$.
\end{proposition}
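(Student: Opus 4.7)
The plan is to exploit surjectivity of $\L$ by pulling back the standard basis of $\C^{\tilde{N}}$, truncating those preimages in the orthonormal basis of $\H$, and showing that the truncated images stay close to the standard basis. A continuity-of-determinant argument then yields surjectivity of $\L_{N_b}$ for all sufficiently large $N_b$.

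First, since $\ra\b{\L}=\C^{\tilde{N}}$, I would pick preimages $u_i\in\H$ with $\L u_i=\hat{\vc e}_i$ for each $i\in\cb{1,\ldots,\tilde{N}}$. Expanding $u_i=\sum_{n=1}^\infty c_n^{(i)}\phi_n$ in the orthonormal basis of $\H$ and setting $u_i^{(N_b)}:=\sum_{n=1}^{N_b}c_n^{(i)}\phi_n\in\H_{N_b}$, Parseval's identity gives $\norm{u_i-u_i^{(N_b)}}_\H\to 0$ as $N_b\to\infty$.

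Next, boundedness of $\L$ propagates this convergence: $\norm{\hat{\vc e}_i-\L u_i^{(N_b)}}_2\le\norm{\L}\,\norm{u_i-u_i^{(N_b)}}_\H\to 0$. Arranging the vectors $\L u_1^{(N_b)},\ldots,\L u_{\tilde{N}}^{(N_b)}$ as columns of an $\tilde{N}\times\tilde{N}$ matrix $A_{N_b}$, we have $A_{N_b}\to I_{\tilde{N}}$ entry-wise. Since the determinant is continuous and $\det I_{\tilde{N}}=1$, there exists some $M\in\N$ such that $\det A_{N_b}\neq 0$ for all $N_b\ge M$. The columns of $A_{N_b}$ then span $\C^{\tilde{N}}$ and lie in $\ra\b{\L_{N_b}}$, giving $\ra\b{\L_{N_b}}=\C^{\tilde{N}}$. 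For the statement that this holds for \emph{all} $N_b\ge M$, the inclusion $\H_M\ss\H_{N_b}$ immediately propagates the conclusion from $N_b=M$ upwards.

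I do not anticipate a substantive obstacle here; the crux is merely recognizing that because $\L$ is bounded and the $\phi_n$ are orthonormal, norm-convergence of truncated expansions in $\H$ forces convergence of their images in $\C^{\tilde{N}}$, after which finite-dimensional linear algebra takes over. A more quantitative version could, if desired, give an explicit $M$ in terms of $\norm{\L}$ and the tails $\sum_{n>N_b}\abs{c_n^{(i)}}^2$, but this is not required for the stated qualitative result.
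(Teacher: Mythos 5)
Your proof is correct and takes a genuinely different, somewhat more elementary route than the paper's. The paper works with the Gram matrix $\vc{\Phi}$ associated with $\L\L^{*}$: it uses the earlier-established fact that $\vc{\Phi}$ is positive definite (hence $\det\vc{\Phi}>0$), bounds the entrywise difference $\abs{\Phi_{jk}-\Phi_{jk}^{(N_b)}}$ via Cauchy--Schwarz on the tails of the $\ell^{2}$ sequences $\cb{\F_{j}\phi_{n}(\x_{j})}_{n}$, and invokes continuity of the determinant to conclude $\det\vc{\Phi}^{(N_b)}>0$ for large $N_b$. You instead pull back the standard basis through $\L$, truncate the coefficient expansions of the preimages, and show the resulting matrix of images $A_{N_b}$ converges entrywise to $I_{\tilde{N}}$, again finishing by determinant continuity. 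Both hinge on the same continuity-of-determinant lemma, but your argument needs only boundedness and surjectivity of $\L$ and does not touch $\L^{*}$ or the positive-definiteness machinery, making it more self-contained; the paper's version dovetails with the surrounding development since $\vc{\Phi}$ and the $\ell^{2}$ tail bounds are reused elsewhere, and it yields directly the nonsingularity of the truncated Gram matrix $\vc{\Phi}^{(N_b)}$ that the numerical scheme actually assembles. Your closing observation that $\H_{M}\ss\H_{N_b}$ propagates surjectivity upward is correct, though redundant once $\det A_{N_b}\ne0$ has been secured for all $N_b\ge M$ by the convergence argument itself.
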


\begin{proof}
We consider the matrix $\Phi$ associated with $\L\L^{*}$. It is
positive definite, so $\det\vc{\Phi}>0$. Let $\vc{\Phi}^{\b{N_{b}}}$
be the matrix associated with $\L_{N_{b}}\L_{N_{b}}^{*}$. Notice:
\begin{align*}
\abs{\Phi_{jk}-\Phi_{jk}^{\b{N_{b}}}} & =\abs{\sum_{n=N_{b}+1}^{\infty}\b{\F_{j}^{*}\phi_{n}}\b{\x_{j}}\b{\F_{k}\phi_{n}}\b{\x_{k}}}\\
 & \le\sqrt{\abs{\sum_{n=N_{b}+1}^{\infty}\abs{\b{\F_{j}\phi_{n}}\b{\x_{j}}}^{2}}\abs{\sum_{n=N_{b}+1}^{\infty}\abs{\b{\F_{k}\phi_{n}}\b{\x_{k}}}^{2}}}\\
 & \to0\text{, as }N_{b}\to0,
\end{align*}
where we recall that $\cb{\F_{j}\phi_{n}\b{\x_{j}}}_{n=1}^{\infty}\in\ell^{2}$.
Now, the determinant is a continuous function of the matrix entries,
so $\det\vc{\Phi}^{\b{N_{b}}}\to\det\vc{\Phi}$, and we must have
that there exists some $M>0$ such that $\det\vc{\Phi}^{\b{N_{b}}}>0$
for all $N_{b}\ge M$.
\end{proof}
An estimate for how many basis functions are needed for $\vc{\Phi}$
to be positive definite is given by Theorem 2.2 of \cite{chand18}.

Now that we know that $\L u=\vc f$ is solvable in $\H_{N_b}$ for sufficiently large $N_b$, we consider the truncated problem:

\begin{align*}
\text{minimize, over }u\in\H_{N_b}: & \,\norm u_{\H},\\
\text{subject to } & \,\L_{N_b} u=\vc f.
\end{align*}
Using $\cb{\phi_n}_{n=1}^{N_b}$ as an orthonormal basis for $\H_{N_b}$, this can be written in matrix form

\begin{align}\label{eq:linalgopt}
\text{minimize, over }\vc c\in\C^{N_b}: & \,\norm{\vc c}_2,\\
\text{subject to } & \,\vc V_{N_b} \vc c=\vc f,\nonumber
\end{align}
where
$\b{\vc V_{N_b}}_{jn} = \b{\F_j\phi_n}\b{\x_j}$.

Problem (\ref{eq:linalgopt}) is a common linear algebra problem with solution $\tilde{\vc c}:=\vc V_{N_b}^+\vc f$, where $\vc V_{N_b}^+$ is the Moore-Penrose pseudoinverse of $\vc V_{N_b}$. Numerically, $\tilde{\vc c}$ can be computed via singular value decomposition or complete orthogonal decomposition. We also have that $\vc \Phi_{N_b}=\vc V_{N_b}\vc V_{N_b}^*$ and $\vc V_{N_b}^+=\vc V_{N_b}^*\b{\vc V_{N_b}\vc V_{N_b}^*}^{-1}=\vc V_{N_b}^*\vc\Phi_{N_b}^{-1}$. However, it is important to note that the condition number of $\vc\Phi_{N_b}$ is the square of the condition number of $\vc V_{N_b}$; this means that using $\vc\Phi_{N_b}$ can result in a significantly higher error in practice for larger numbers of points ($\tilde{N}$). Therefore, when high accuracy is needed, computing $\tilde {\vc c}$ via singular value decomposition or complete orthogonal decomposition is preferred over solving $\vc \Phi_{N_b}\vc \beta=\vc f$ and using $\tilde{\vc c}=\vc V_{N_b}^*\vc \beta $. That said, if $N_b\gg \tilde{N}$ and high accuracy is not needed, or the number of points is low enough so that the condition number of $\vc \Phi_{N_b}$ is not too large, using $\vc \Phi_{N_b}$ can be significantly faster. This is particularly true if $\vc \Phi_{N_b}$ or terms in $\vc \Phi_{N_b}$ need to be computed only once for multiple problems of the form (\ref{eq:linalgopt}). Numerical details are discussed further in Subsection \ref{subsec:genimpl}.

\section{Analysis}\label{sec:analysis}
\subsection{Solvability and Convergence}

We continue with the setup introduced in Subsection \ref{subsec:Functional-Analysis-Background},
but introduce additional structure in order to state results for PDEs.
Let $S^{\b k}\ss\Omega$ be a domain for each $k\in\cb{1,\ldots,N_{S}}$
and let $\mathcal{G}^{\b k}$ be a differential operator defined on
a neighbourhood of $S^{\b k}$. $S^{\b k}$ may have any shape or
co-dimension. For example, $S^{\b 1}$ may be a surface, and we may
have $S^{\b 2}=\partial S^{\b 1}$. Let $\cb{\x_{j}}_{j=1}^{\tilde{N}}$be
a collection of points in $\bigcup_{k=1}^{N_{S}}S^{\b k}$, and let
$\F_{j}=\G^{\b{k_{j}}}$ for some $k_{j}\in\cb{1,\ldots,N_{S}}$ so
that $\x_{j}\in S^{\b{k_{j}}}$, where $\F_{j}$
is as introduced in Subsection \ref{subsec:Functional-Analysis-Background}
but now associated with an operator ($\G^{\b{k_{j}}}$) on a subdomain
($S^{\b{k_{j}}}$). 
The (non-discretized) problem we are interested
in is to find some $u\in\H$ such that
\begin{equation}
\G^{\b k}u=g^{\b k}\text{ on }S^{\b k}\text{, for each }k\in\cb{1,2,\ldots,N_{S}}.\label{eq:problemmain}
\end{equation}
We assume that the operators $\G^{\b k}$ are bounded in the $\H\to L^{\infty}$
sense on their domains. That is, we require, for all $\x\in S^{\b k}$
and for any $u\in\H$, that there exists some $M^{\b k}>0$ so that
\begin{equation}\label{eq:Gbound}
\abs{\b{\G^{\b k}u}\b{\x}}\le M^{\b k}\norm u_{\H}.
\end{equation}
Recalling Example \ref{exa:h1}, if $\G^{\b k}$
is the identity and $\H=H^{1}\b{0,1}$, for instance, then $M^{\b k}=\sqrt{2}$ satisfies the above inequality. For higher dimensions and higher derivatives,
a greater degree of smoothness is required; if $\H=H^{\frac{m}{2}+p+\lc}\b{\R^{m}}$
for some $p\in\N$ and $\lc>0$, then such a constant exists when $\G^{\b k}=\partial^{\alpha}$
for $\abs{\alpha}\le p$ (see Theorem 3.26 of \cite{mclea00}).

With the assumption given by Eq. (\ref{eq:Gbound}), if $\H$ is separable, we have
\begin{align*}
\abs{\sum_{n=1}^{\infty}c_{n}\b{\G^{\b k}\phi_{n}}\b{\x}} & \le M^{\b k}\norm c_{\ell^{2}},\\
\abs{\sum_{n=N_{b}+1}^{\infty}c_{n}\b{\G^{\b k}\phi_{n}}\b{\x}} & \le M^{\b k}\sqrt{\sum_{n=N_{b}+1}^{\infty}\abs{c_{n}}^{2}}.
\end{align*}
So, the series $\sum_{n=1}^{\infty}c_{n}\b{\G^{\b k}\phi_{n}}$ converges
uniformly on $S^{\b k}$ as $N_{b}\to\infty$. In particular, if $\cb{\G^{\b k}\phi_{n}}$
are continuous on $S^{\b k}$, then $\G^{\b k}u=\sum_{n=1}^{\infty}c_{n}\b{\G^{\b k}\phi_{n}}$
will be as well. Therefore, the boundedness of $\abs{\b{\G^{\b k}u}\b{\x}}$
by a constant multiple of $\norm u_{\H}$ is enough for $\G^{\b k}u$
to be continuous on $S^{\b k}$ for all $u\in\H$ in the case that $\H$
is separable. For each $k$, we also let $g^{\b k}$ be a function on $S^{\b k}$
so that $f_{j}=g^{\b{k_{j}}}\b{\x_{j}}$, where the index $k_j$ is defined such that $\F_{j}=\G^{\b{k_{j}}}$.
This leads to an important proposition, which is a generalized version of a result applying to Hilbert spaces constructed from Fourier series in \cite{venn}.
\\
\begin{proposition}
\label{prop:Let--be}Let $\cb{\x_{j}}_{j=1}^{\infty}$ be a collection
of points such that $\cb{\x_{j}}_{j=1}^{\infty}\cap S^{\b k}$ is
dense in $S^{\b k}$ for each $k\in\cb{1,2,\ldots,N_{S}}$. Let $\L^{\b{\tilde{N}}}:\H\to\C^{\tilde{N}}$
such that $\b{\L^{\b{\tilde{N}}}u}_{j}=\F_{j}u\b{\x_{j}}$ and assume
that each $\L^{\b{\tilde{N}}}$ is bounded and that $\G^{\b k}u$ is continuous
on $S^{\b k}$ for all $u\in\H$ and for each $k\in\cb{1,2,\ldots,N_{S}}$. Then, if it exists, let $u^{\b{\tilde{N}}}$
be the unique solution to
\begin{align}
\text{minimize, over }u\in\H: & \,\norm u_{\H},\label{eq:opt1-1}\\
\text{subject to } & \,\F_{j}u\b{\x_{j}}=f_{j}\text{, for each \ensuremath{j\in\cb{1,2,\ldots,\tilde{N}}}}.\nonumber 
\end{align}
If for each $\tilde{N}$, $u^{\b{\tilde{N}}}$ exists and $\norm{u^{\b{\tilde{N}}}}_{\H}<B$
for some $B>0$, then there exists a solution $u^{\b{\infty}}\in\H$
to
\begin{equation}
\G^{\b k}u^{\b{\infty}}=g^{\b k}\text{ on }S^{\b k}\text{, for each }k\in\cb{1,2,\ldots,N_{S}}.\label{eq:fullpde}
\end{equation}
Furthermore, $\norm{u^{\b{\infty}}-u^{\b{\tilde{N}}}}_{\H}\to0$.
\end{proposition}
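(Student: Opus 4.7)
The plan is to extract a weakly convergent subsequence from the norm-bounded family $\cb{u^{\b{\tilde{N}}}}$, show its weak limit solves (\ref{eq:fullpde}), identify it as the unique minimum-norm solution, and then upgrade weak convergence of the whole sequence to strong convergence in $\H$. Since $\norm{u^{\b{\tilde{N}}}}_\H<B$ for every $\tilde{N}$ and $\H$ is Hilbert, some subsequence satisfies $u^{\b{\tilde{N}_\ell}}\rightharpoonup u^{\b{\infty}}$ for a limit $u^{\b{\infty}}\in\H$. Each point-evaluation $u\mapsto\F_j u\b{\x_j}$ is bounded by hypothesis and hence weakly continuous, so for every fixed $j$, feasibility of $u^{\b{\tilde{N}_\ell}}$ (valid once $\tilde{N}_\ell\ge j$) gives $\F_j u^{\b{\tilde{N}_\ell}}\b{\x_j}=f_j=g^{\b{k_j}}\b{\x_j}$, and passing to the weak limit yields $\b{\G^{\b{k_j}}u^{\b{\infty}}}\b{\x_j}=g^{\b{k_j}}\b{\x_j}$ at every $\x_j$. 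Since $\cb{\x_j}\cap S^{\b k}$ is dense in $S^{\b k}$ and $\G^{\b k}u^{\b{\infty}}$ is continuous on $S^{\b k}$ by hypothesis, this pointwise identity extends to $\G^{\b k}u^{\b{\infty}}=g^{\b k}$ on all of $S^{\b k}$.

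For norm convergence, observe that $u^{\b{\infty}}$ satisfies every finite collection of interpolation constraints from (\ref{eq:opt1-1}), so minimality of $u^{\b{\tilde{N}}}$ forces $\norm{u^{\b{\tilde{N}}}}_\H\le\norm{u^{\b{\infty}}}_\H$ for every $\tilde{N}$. Combined with weak lower semi-continuity of the norm,
\begin{align*}
\norm{u^{\b{\infty}}}_\H\le\liminf_{\ell\to\infty}\norm{u^{\b{\tilde{N}_\ell}}}_\H\le\norm{u^{\b{\infty}}}_\H,
\end{align*}
so $\norm{u^{\b{\tilde{N}_\ell}}}_\H\to\norm{u^{\b{\infty}}}_\H$. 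Moreover, $\norm{u^{\b{\tilde{N}}}}_\H$ is non-decreasing in $\tilde{N}$ (adding constraints cannot decrease the minimum), so the full sequence of norms converges to $\norm{u^{\b{\infty}}}_\H$.

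To promote subsequential convergence to convergence of the full sequence, I would appeal to uniqueness: the same argument applied to any weakly convergent subsequence produces a weak limit that solves (\ref{eq:fullpde}) with norm $\lim_{\tilde{N}\to\infty}\norm{u^{\b{\tilde{N}}}}_\H$, but the constraint set of (\ref{eq:fullpde}) is a closed affine subspace of $\H$ (an intersection of preimages of points under bounded linear functionals), so its minimum-norm element is unique, forcing every such weak limit to equal $u^{\b{\infty}}$. A standard subsequence argument then yields $u^{\b{\tilde{N}}}\rightharpoonup u^{\b{\infty}}$ for the whole sequence, and combining with norm convergence via
\begin{align*}
\norm{u^{\b{\tilde{N}}}-u^{\b{\infty}}}_\H^2=\norm{u^{\b{\tilde{N}}}}_\H^2-2\,\mathrm{Re}\,\ip{u^{\b{\tilde{N}}},u^{\b{\infty}}}_\H+\norm{u^{\b{\infty}}}_\H^2
\end{align*}
gives $\norm{u^{\b{\tilde{N}}}-u^{\b{\infty}}}_\H\to 0$. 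I expect the main obstacle to be the density-plus-continuity passage from pointwise equality on $\cb{\x_j}\cap S^{\b k}$ to equality on all of $S^{\b k}$, which relies essentially on the continuity of $\G^{\b k}u^{\b{\infty}}$ supplied by the $\H\to L^{\infty}$ boundedness hypothesis discussed immediately before the statement.
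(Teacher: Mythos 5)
Your proof is correct, but it takes a genuinely different route from the paper's. The paper argues directly, without any weak compactness: it first observes that $\norm{u^{\b{\tilde{N}}}}_{\H}$ is nondecreasing (each new constraint can only raise the minimum norm), then uses the orthogonality relation $u^{\b{\tilde{N}_1}}\in\mathcal{N}\b{\L^{\b{\tilde{N}_1}}}^{\perp}$ together with $u^{\b{\tilde{N}_1}}-u^{\b{\tilde{N}_2}}\in\mathcal{N}\b{\L^{\b{\tilde{N}_1}}}$ to obtain the telescoping identity $\norm{u^{\b{\tilde{N}_1}}-u^{\b{\tilde{N}_2}}}_{\H}^{2}=\norm{u^{\b{\tilde{N}_2}}}_{\H}^{2}-\norm{u^{\b{\tilde{N}_1}}}_{\H}^{2}$, from which Cauchy-ness of $\cb{u^{\b{\tilde{N}}}}$ follows immediately from convergence of the scalar norm sequence. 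The limit lies in each closed affine constraint set $u^{\b{\tilde{N}_1}}+\mathcal{N}\b{\L^{\b{\tilde{N}_1}}}$, and density plus continuity finish the job exactly as in your last step. Your version instead extracts a weak limit via reflexivity, passes to the limit in each bounded coordinate functional, and then upgrades to strong convergence by combining weak lower semicontinuity with the monotone norm bound $\norm{u^{\b{\tilde{N}}}}_{\H}\le\norm{u^{\b{\infty}}}_{\H}$ and a uniqueness-of-subsequential-limits argument. Both are valid; the paper's approach is more elementary and explicitly Hilbert-geometric, giving Cauchy-ness directly without invoking weak sequential compactness or the uniqueness of minimum-norm elements in closed convex sets, whereas your approach makes the identification of $u^{\b{\infty}}$ as the norm-minimizing solution of (\ref{eq:fullpde}) an explicit byproduct of the proof (a fact the paper only notes afterward). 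One small point worth tightening: your claim that the solution set of (\ref{eq:fullpde}) is closed is cleanest if you verify closedness through the dense countable family $\cb{\x_j}$ (where boundedness of the evaluation functionals is guaranteed by the hypothesis that $\L^{\b{\tilde{N}}}$ is bounded) rather than asserting it as an uncountable intersection of preimages under functionals whose boundedness at arbitrary $\x\in S^{\b k}$ is supplied only by the surrounding discussion, not the proposition's stated hypotheses.
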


\begin{proof} Assume that the solutions $u^{\b{\tilde{N}}}$ to problem (\ref{eq:opt1-1}) exist for each $\tilde{N}$ and that $\norm{u^{\b{\tilde{N}}}}_{\H}<B$ for some $B>0$.
First note that if $\tilde{N}_{1}<\tilde{N}_{2}$, then $\F_{j}u^{\b{\tilde{N}_{2}}}\b{\x_{j}}=f_{j}\b{\x_{j}}$
for each $j\in\cb{1,2,\ldots,\tilde{N}_{2}}\supset\cb{1,2,\ldots,\tilde{N}_{1}}$,
so $\norm{u^{\b{\tilde{N}_{1}}}}_{\H}\le\norm{u^{\b{\tilde{N}_{2}}}}_{\H}$.
Furthermore, $u^{\b{\tilde{N}_{1}}}-u^{\b{\tilde{N}_{2}}}\in\mathcal{N}\b{\L^{\b{\tilde{N}_{1}}}}$, and $u^{\b{\tilde{N}_{1}}}\in\mathcal{N}\b{\L^{\b{\tilde{N}_{1}}}}^{\perp}$,
so
\begin{align*}
\b{u^{\b{\tilde{N}_{1}}},u^{\b{\tilde{N}_{1}}}-u^{\b{\tilde{N}_{2}}}}_{\H} & =0\\
\implies\norm{u^{\b{\tilde{N}_{1}}}}_{\H}^{2} & =\b{u^{\b{\tilde{N}_{1}}},u^{\b{\tilde{N}_{2}}}}_{\H}=\b{u^{\b{\tilde{N}_{2}}},u^{\b{\tilde{N}_{1}}}}_{\H}.
\end{align*}

Then, $\cb{\norm{u^{\b{\tilde{N}}}}_{\H}}$ is a bounded,
non-decreasing sequence of real numbers, so $\norm{u^{\b{\tilde{N}}}}_{\H}\uparrow\tilde{B}$,
for some $\tilde{B}\ge0$. Then,
\begin{align*}
\norm{u^{\b{\tilde{N}_{1}}}-u^{\b{\tilde{N}_{2}}}}_{\H}^{2} & =-\b{u^{\b{\tilde{N}_{2}}},u^{\b{\tilde{N}_{1}}}-u^{\b{\tilde{N}_{2}}}}_{\H}\\
 & =\norm{u^{\b{\tilde{N}_{2}}}}_{\H}^{2}-\b{u^{\b{\tilde{N}_{2}}},u^{\b{\tilde{N}_{1}}}}_{\H}\\
 & =\norm{u^{\b{\tilde{N}_{2}}}}_{\H}^{2}-\norm{u^{\b{\tilde{N}_{1}}}}_{\H}^{2}\\
 & \le2\tilde{B}\b{\norm{u^{\b{\tilde{N}_{2}}}}_{\H}-\norm{u^{\b{\tilde{N}_{1}}}}_{\H}}.
\end{align*}
The sequence of norms $\cb{\norm{u^{\b{\tilde{N}}}}_{\H}}$ converges and is therefore
Cauchy, so $\cb{u^{\b{\tilde{N}}}}$ is Cauchy in $\H$ and therefore
converges to some $u^{\b{\infty}}\in\H$, since $\H$ is a Hilbert
space. Now, for each $\tilde{N}>\tilde{N}_{1}$, notice $u^{\b{\tilde{N}}}$
is in the constraint set $u^{\b{\tilde{N}_{1}}}+\mathcal{N}\b{\L^{\b{\tilde{N}_{1}}}}$,
which is closed since $\L^{\b{\tilde{N}_{1}}}$ is bounded. We then
have that $u^{\b{\infty}}\in u^{\b{\tilde{N}_{1}}}+\mathcal{N}\b{\L^{\b{\tilde{N}_{1}}}}$
for each $\tilde{N}_{1}$, and importantly,

\[
\F_{j}u^{\b{\infty}}\b{\x_{j}}=f\b{\x_{j}}\text{, for each \ensuremath{j\in\N}},
\]
and therefore,

\[
\G^{\b{k_{j}}}u^{\b{\infty}}\b{\x_{j}}=g^{\b{k_{j}}}\b{\x_{j}}\text{ on }S^{\b{k_{j}}}\text{, for each }j\in\N.
\]
Finally, $\cb{\x_{j}}_{j=1}^{\infty}\cap S^{\b k}$ is dense in $S^{\b k}$
and $\G^{\b k}u^{\b{\infty}}$ is continuous, so 

\[
\G^{\b k}u^{\b{\infty}}=g^{\b k}\text{ on }S^{\b k}\text{, for each }k\in\cb{1,2,\ldots,N_{S}}.
\]
\end{proof}
There are a few observations to make about this proposition. The first
is that the existence of any strong form solution $u\in\H$ to problem
(\ref{eq:problemmain}) would imply that $\norm{u^{\b{\tilde{N}}}}_{\H}$
is bounded. Perhaps more interesting from a numerical perspective
is that we can conclude the converse from the proposition. If $\norm{u^{\b{\tilde{N}}}}_{\H}$
is bounded, then problem (\ref{eq:problemmain}) is solvable. This is notable
because $\norm{u^{\b{\tilde{N}}}}_{\H}$ is a quantity that is simple
to compute numerically; it is $\sqrt{\vc{\beta}^{*}\vc{\Phi\beta}}=\sqrt{\vc{\beta}^{*}\vc f}$.
Just by examining $\norm{u^{\b{\tilde{N}}}}_{\H}$, we can then hope
to numerically investigate an extremely wide range of PDE solvability
questions, including eigenvalue problems. We also note that the interpolant solutions $u^{\b{\tilde{N}}}$ converge to $u^{\b{\infty}}$,
a solution to the PDE (\ref{eq:problemmain}), in the $\H$-norm. For suitable
choices of $\H$, this implies uniform convergence of $u^{\b{\tilde{N}}}$
and its derivatives of sufficiently low order (depending on $\H$) to $u^{\b{\infty}}$
on \textit{all of $\Omega$},
even in regions where there are no scattered points.

In the case that it is known that a solution to (\ref{eq:problemmain})
exists and can be extended to a function $u\in\H$, boundedness of
$\norm{u^{\b{\tilde{N}}}}_{\H}$ is guaranteed (since $u$ is feasible). This implies that 
$u^{\b{\tilde{N}}}$ will converge in $\H$ as $\tilde{N}\to\infty$ to some solution $u^{\b{\infty}}$ to
(\ref{eq:problemmain}). The solution $u^{\b{\infty}}$ may or may not be equal to $u$, since the
extension from a solution of the PDE to $\H$ may not be unique, and
the PDE may have multiple solutions on its domain. This is noteworthy
because it implies convergence of $u^{\b{\tilde{N}}}$ to a single,
norm-minimizing solution $u^{\b{\infty}}$, even in cases where there are multiple
solutions to the PDE. Also note that when a solution to (\ref{eq:problemmain}) exists, a result in Banach spaces, rather than just Hilbert spaces, shows that $\G^{\b k}u^{\b{\tilde{N}}}\to g^{\b k}$ pointwise (see Theorem 7.1 of \cite{chand18}); Proposition \ref{prop:Let--be} expands on this result in Hilbert spaces by noting that boundedness of $\norm{u^{\b{\tilde{N}}}}_\H$ implies the existence of a solution $u^{\b{\infty}}$ to (\ref{eq:problemmain}), and that $u^{\b{\tilde{N}}}\to u^{\b \infty}$ in the $\H$-norm.

\subsection{Convergence Rate Analysis for Certain Eigenvalue Problems\label{subsec:Convergence-Rate-Analysis}}

To state convergence rates, we must first recall the definition
of the fill distance.
~

\begin{definition}
The fill distance $h_{\text{max}}$ of a collection
of points $\cb{\x_{j}}_{j=1}^{\tilde{N}}\subset S$, where $S\subset\R^{m}$
is a bounded domain, is given by
\[
h_{\text{max}}=\sup_{\x\in S}\min_{j\in\cb{1,2,\ldots,\tilde{N}}}\norm{\x-\x_{j}}_2.
\]

\end{definition}
That is, the fill distance is the largest distance
between a point $\x\in S$ and its closest point in $\cb{\x_{j}}_{j=1}^{\tilde{N}}$.

We now set up an eigenvalue problem, for which we
will prove the convergence of eigenvalue and eigenfunction estimates.
Let $\cb{\vc a_{j}}_{j=1}^{\tilde{N}_{a}}$ be a collection of points
on a domain $S$ or its boundary $\partial S$, and let $\cb{b_{j}}_{n=1}^{\tilde{N}_{a}}$
be a set of values such that at least one $b_{j}$ is non-zero. We
then consider the eigenvalue problem:
\begin{align}
\mathcal{F}u-\lambda u & =0\text{ on \ensuremath{S}},\label{eq:fulleigprob}\\
\G u\eval_{\partial S} & =0,\nonumber \\
u\b{\vc a_{j}} & =b_{j}\text{, for each \ensuremath{j\in\cb{1,2,\ldots,\tilde{N}_{a}}}},\nonumber 
\end{align}
where $\F$ is a linear differential operator of order $q$ and $\mathcal{G}$ is a differential operator
of order $\tilde{q}\le q$.
Let $C_{\G}^{q}\b{\overline{S}}=\cb{u\in C^{q}\b{\overline{S}}:\G u\eval_{\partial S}=0}$. 

\subsubsection{Assumptions\label{subsec:Assumptions}}

We make the following list of assumptions regarding the problem. Note
that each of these is satisfied by an elliptic $\F$, such as the Laplacian, with sufficiently
smooth coefficient functions on a Lipschitz domain
and a $\G$ that corresponds to Dirichlet, Neumann, or Robin boundary conditions (see Thms. 4.10-4.12 of \cite{mclea00} for symmetric, strongly elliptic operators on Lipschitz domains, for example). These assumptions are also satisfied by the Laplace--Beltrami operator on smooth manifolds, either on closed manifolds or with Dirichlet, Neumann, or Robin boundary conditions (see Chapter 5 of the text by Taylor \cite{taylo23} for regularity theory for the Laplace--Beltrami operator). Importantly, each (non-Steklov) problem examined in the numerical experiments in Section 4 satisfies these assumptions. 
\begin{enumerate}
\item \label{enu:well-posed}There exists some $\alpha_{k}\in\C$
so that for any $C^{\tilde{q}}$ function $w$, the related problem
with inhomogeneous boundary conditions can be made well-posed:
\begin{align*}
\b{\F-\alpha_{k}}g & =0\text{ on $S$},\\
\G g\eval_{\partial S} & =\G w\eval_{\partial S}.
\end{align*}
Specifically, we require that there exists a constant $C_{k}>0$ such
that the problem above always has at least one solution $g\in C^{q}\b{\overline{S}}$
such that, for some $\tilde{s}\in\R$,
\begin{equation}
\norm g_{L^{2}\b S}\le C_{k}\norm{\G w}_{H^{\tilde{s}}\b{\partial S}}.\label{eq:gassump}
\end{equation}
This assumption holds for Lipschitz domains $S$, uniformly elliptic
operators $\F-\alpha_{k}$, and Dirichlet, Neumann, or Robin boundary
conditions (see Thms. 4.10 and 4.11 of \cite{mclea00}, noting $\tilde{s}=\frac{1}{2}$
for Dirichlet problems and $\tilde{s}=-\frac{1}{2}$ for Neumann and
Robin problems), assuming $\alpha_{k}$ is chosen such that it is
not an eigenvalue of $\F$.
\item \label{enu:counteig}There is a countable set of values of $\lambda$
for which a non-zero solution $u\in C^{q}\b S$ exists to (\ref{eq:fulleigprob}).
We call this set $\cb{\lambda_{j}}$: the set of eigenvalues.
\item \label{enu:closedeig}For a specific $\lambda_{k}$ of interest, the
eigenspace $\mathcal{N}\b{\F_{\G}-\lambda_{k}}$, where $\F_{\G}:C_{\G}^{q}\b{\overline{S}}\to C\b S$
is the restriction of $\F$ to $C_{\G}^{q}\b{\overline{S}}$, is closed
as a subspace of $L^{2}\b S$ (finite-dimensional, for example). 
\item \label{enu:lambda}$\lambda$ is closer to $\lambda_{k}$ than to
any other eigenvalue in $\cb{\lambda_{j}}$.
\item \label{enu:coercive}$\F_{\G}-\lambda$ is coercive on $\mathcal{N}\b{\F_{\G}-\lambda_{k}}^{\perp}\cap C_{\G}^{q}\b{\overline{S}}$
with the same constant for each $\lambda$ satisfying point \ref{enu:lambda}.
Specifically, there exists a $B_{k}>0$ such that for all $\lambda$
satisfying point \ref{enu:lambda} and all $u\in\mathcal{N}\b{\F_{\G}-\lambda}^{\perp}\cap C_{\G}^{q}\b S$,
$B_{k}\norm u_{L^{2}\b S}\le\norm{\b{\F_{\G}-\lambda}u}_{L^{2}\b S}$. One case where this holds is when $\F_\G$ admits an orthonormal basis of eigenfunctions (in this case, $B_k$ is just half the spectral gap for $\F_\G - \lambda_k$), $\F_\G$ has no accumulation points in its spectrum, and there exists an $\alpha\in\C$ such that $\F_\G - \alpha$ has a bounded inverse with respect to the $L^2$ norm.

\item \label{enu:angle}There is some $\theta_{k}\in\left\lceil 0,1\right)$ such that for all $v\in\mathcal{N}\b{\F_{\G}-\lambda_{k}}^{\perp}\cap C_{\G}^{q}\b{\overline{S}}$
and all $u\in\mathcal{N}\b{\F_{\G}-\lambda_{k}}$, 
\[
\abs{\b{\b{\F_{\G}-\lambda}v,u}_{L^{2}\b S}}\le\theta_{k}\norm{\b{\F_{\G}-\lambda}v}_{L^{2}\b S}\norm u_{L^{2}\b S}.
\]
It is straightforward to show
in this case that 
\[
\norm{\b{\F_{\G}-\lambda}v+u}_{L^{2}\b S}^{2}\ge\b{1-\theta_{k}}\b{\norm{\b{\F_{\G}-\lambda}v}_{L^{2}\b S}^{2}+\norm u_{L^{2}\b S}^{2}}.
\]
Note that if $\F_{\G}$ is a symmetric operator, $\theta_{k}=0$
and the inequality becomes an equality.

\end{enumerate}
~

Finally, we also consider the discretized problem. Let $\cb{\x_{j}}_{j=1}^{\infty}\subset S$
and $\cb{\vc y_{j}}_{j=1}^{\infty}\subset\partial S$ be point clouds
such that the fill distance $h_{\text{max}}$ of $\cb{\x_{j}}_{j=1}^{\tilde{N}}$
in $S$ and $\cb{\vc y_{j}}_{j=1}^{\tilde{N}_{\partial}}$ in $\partial S$ goes to
zero as $\tilde{N},\tilde{N}_{\partial}\to\infty$. The discretized
problem is typically:
\begin{align}
\text{minimize, over }u\in\H:\, & \norm u_{\H},\label{eq:opt1-1-1}\\
\text{subject to }\, & \mathcal{F}u\b{\x_{j}}-\lambda u\b{\x_{j}}=0\text{, for each \ensuremath{j\in\cb{1,2,\ldots,\tilde{N}}}},\nonumber \\
 & \mathcal{G}u\b{\vc y_{j}}=0\text{, for each \ensuremath{j\in\cb{1,2,\ldots,\tilde{N}_{\partial}}}},\nonumber \\
 & u\b{\vc a_{j}}=b_{j}\text{, for each \ensuremath{j\in\cb{1,2,\ldots,\tilde{N}_{a}}}}.\nonumber 
\end{align}
Note that splitting any of these conditions into
two or more, such as $\mathcal{F}_{1}u\b{\x_{j}}-\lambda u\b{\x_{j}}=0$
and $\F_{2}u\b{\x_{j}}=0$, where $\F=\F_{1}+\F_{2}$, does not change
the proof or conclusions of Proposition \ref{prop:If-all-assumptions}. This is noteworthy since this is how
surface PDEs are discretized later (see Subsection \ref{subsec:Laplace--Beltrami} and Eq. (\ref{eq:eiglapbeltdisc}) for the discretization of a surface PDE).

We assume $\H$ is such that there are constants $A,\tilde{A},p,Q_{a}>0$
such that the solution $\tilde{u}_{\lambda}$ to problem (\ref{eq:opt1-1-1}) exists and
satisfies, for small enough $h_{\text{max}}$, 
\begin{align}
\norm{\mathcal{F}\tilde{u}_{\lambda}-\lambda\tilde{u}_{\lambda}}_{L^{2}\b S} & \le Ah_{\text{max}}^{p}\norm{\tilde{u}_{\lambda}}_{\H},\label{eq:Fuassumpt}\\
\norm{\G\tilde{u}_{\lambda}}_{H^{\tilde{s}}\b{\partial S}} & \le\tilde{A}h_{\text{max}}^{p}\norm{\tilde{u}_{\lambda}}_{\H},\label{eq:Guassump}\\
\norm{\tilde{u}_{\lambda}}_{L^{2}\b S} & \ge\frac{\max\cb{\abs{b_{j}}}Q_{a}}{\norm{\tilde{u}_{\lambda}}_{\H}},\label{eq:normeq}
\end{align}
where
$\tilde{s}$ is the same as in assumption \ref{enu:well-posed}.
Equations (\ref{eq:Fuassumpt})--(\ref{eq:normeq}) are all readily obtainable for many operators $\F,\G$ and spaces
$\H$ used in meshfree methods. The main result (Theorem 1) of \cite{narco05}
leads to (\ref{eq:Fuassumpt}) for a wide range of $\H$-norms that
dominate a Sobolev norm, noting that $\mathcal{F}\tilde{u}_{\lambda}-\lambda\tilde{u}_{\lambda}$
is a function with scattered zeros. $(\ref{eq:Guassump})$ can also
be obtained from Theorem 1 of \cite{narco05} for boundaries that
are sufficiently piecewise smooth, if the result is applied to the
domain of the parametrization(s) of the boundary. The last condition
uses the fact that $\tilde{u}_{\lambda}\b{\vc a_{j}}=b_{j}$ and the
fact that it is often simple to show that $\norm{\nabla u}_{L^{\infty}\b{\Omega}}\le K\norm u_{\H}$
for some constant $K>0$ and various choices of $\H$.

\subsubsection{Convergence-Divergence Result for Well-Posed Eigenvalue Problem}
\begin{proposition}
\label{prop:If-all-assumptions}If all assumptions from Subsection
\ref{subsec:Assumptions} hold, then there exist constants $\tilde{B}_{k},\tilde{C}_{k},\Lambda_{k},\tilde{\Lambda}_{k}>0$
such that for small enough $h_{\text{max}}$, there exists an eigenfunction
$v_{k}\in\mathcal{N}\b{\F_{\G}-\lambda_{k}}$ such that, if $\b{\tilde{B}_{k}+\Lambda_{k}\abs{\lambda-\lambda_{k}}}h_{\text{max}}^{p}\norm{\tilde{u}_{\lambda}}_{\H}^{2}\le\max\cb{\abs{b_{j}}}Q_{a}$,
\begin{align*}
\abs{\lambda-\lambda_{k}} & \le\frac{\b{\tilde{C}_{k}+\tilde{\Lambda}_{k}\abs{\lambda-\lambda_{k}}}h_{\text{max}}^{p}\norm{\tilde{u}_{\lambda}}_{\H}^{2}}{\max\cb{\abs{b_{j}}}Q_{a}-\b{\tilde{B}_{k}+\Lambda_{k}\abs{\lambda_{k}-\lambda}}h_{\text{max}}^{p}\norm{\tilde{u}_{\lambda}}_{\H}^{2}},\\
\frac{\norm{\tilde{u}_{\lambda}-v_{k}}_{L^{2}\b S}}{\norm{v_{k}}_{L^{2}\b S}} & \le\frac{\b{\tilde{B}_{k}+\Lambda_{k}\abs{\lambda-\lambda_{k}}}h_{\text{max}}^{p}\norm{\tilde{u}_{\lambda}}_{\H}^{2}}{\max\cb{\abs{b_{j}}}Q_{a}-\b{\tilde{B}_{k}+\Lambda_{k}\abs{\lambda-\lambda_{k}}}h_{\text{max}}^{p}\norm{\tilde{u}_{\lambda}}_{\H}^{2}},
\end{align*}
where $\tilde{u}_{\lambda}$ is the solution to problem (\ref{eq:opt1-1-1}).
\end{proposition}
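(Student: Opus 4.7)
The plan is to decompose a boundary-corrected version of $\tilde{u}_{\lambda}$ into an eigenspace component and an orthogonal residual, and then use Assumptions \ref{enu:coercive} and \ref{enu:angle} to control both pieces by $\norm{\b{\F_{\G}-\lambda}\hat{u}}_{L^{2}\b S}$, which is small by Eq. (\ref{eq:Fuassumpt}). The immediate obstruction is that $\tilde{u}_{\lambda}$ does not satisfy $\G u\eval_{\partial S}=0$ exactly, so it does not lie in $C_{\G}^{q}\b{\overline{S}}$ and Assumptions \ref{enu:coercive}--\ref{enu:angle} cannot be applied to it directly.

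To remedy this, I would invoke Assumption \ref{enu:well-posed} with $w=\tilde{u}_{\lambda}$ to produce a boundary correction $g\in C^{q}\b{\overline{S}}$ satisfying $\b{\F-\alpha_{k}}g=0$ on $S$ and $\G g\eval_{\partial S}=\G\tilde{u}_{\lambda}\eval_{\partial S}$, with $\norm g_{L^{2}\b S}\le C_{k}\tilde{A}h_{\text{max}}^{p}\norm{\tilde{u}_{\lambda}}_{\H}$ by Eq. (\ref{eq:Guassump}). Then $\hat{u}:=\tilde{u}_{\lambda}-g$ lies in $C_{\G}^{q}\b{\overline{S}}$, and using $\F g=\alpha_{k}g$,
\[
\b{\F_{\G}-\lambda}\hat{u}=\b{\F-\lambda}\tilde{u}_{\lambda}-\b{\alpha_{k}-\lambda}g.
\]
Splitting $\abs{\alpha_{k}-\lambda}\le\abs{\alpha_{k}-\lambda_{k}}+\abs{\lambda-\lambda_{k}}$ and combining with Eq. (\ref{eq:Fuassumpt}) bounds the $L^{2}\b S$-norm of the right-hand side by $\b{\tilde{B}_{k}+\Lambda_{k}\abs{\lambda-\lambda_{k}}}h_{\text{max}}^{p}\norm{\tilde{u}_{\lambda}}_{\H}$ for suitable constants $\tilde{B}_{k},\Lambda_{k}$ that absorb $\abs{\alpha_{k}-\lambda_{k}}$, $A$, $\tilde{A}$, and $C_{k}$.

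Next, because $\mathcal{N}\b{\F_{\G}-\lambda_{k}}$ is closed in $L^{2}$ by Assumption \ref{enu:closedeig}, I write $\hat{u}=v_{k}+w$ where $v_{k}$ is the $L^{2}$-orthogonal projection of $\hat{u}$ onto $\mathcal{N}\b{\F_{\G}-\lambda_{k}}$ and $w\in\mathcal{N}\b{\F_{\G}-\lambda_{k}}^{\perp}\cap C_{\G}^{q}\b{\overline{S}}$. Since $\b{\F_{\G}-\lambda}\hat{u}=\b{\F_{\G}-\lambda}w+\b{\lambda_{k}-\lambda}v_{k}$ with $w$ in the orthogonal complement and $\b{\lambda_{k}-\lambda}v_{k}\in\mathcal{N}\b{\F_{\G}-\lambda_{k}}$, the displayed inequality in Assumption \ref{enu:angle} combined with the coercivity bound $\norm{\b{\F_{\G}-\lambda}w}_{L^{2}\b S}\ge B_{k}\norm w_{L^{2}\b S}$ from Assumption \ref{enu:coercive} yields both
\[
\abs{\lambda-\lambda_{k}}\norm{v_{k}}_{L^{2}\b S}\le\frac{\b{\tilde{B}_{k}+\Lambda_{k}\abs{\lambda-\lambda_{k}}}h_{\text{max}}^{p}\norm{\tilde{u}_{\lambda}}_{\H}}{\sqrt{1-\theta_{k}}}
\]
and the analogous bound for $B_{k}\norm w_{L^{2}\b S}$ with the same right-hand side.

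Finally, $\norm{\tilde{u}_{\lambda}-v_{k}}_{L^{2}\b S}\le\norm g_{L^{2}\b S}+\norm w_{L^{2}\b S}$ is itself bounded by a multiple of $\b{\tilde{B}_{k}+\Lambda_{k}\abs{\lambda-\lambda_{k}}}h_{\text{max}}^{p}\norm{\tilde{u}_{\lambda}}_{\H}$, so combining with the normalization Eq. (\ref{eq:normeq}) gives, after suitable relabelling of constants,
\[
\norm{v_{k}}_{L^{2}\b S}\norm{\tilde{u}_{\lambda}}_{\H}\ge\max\cb{\abs{b_{j}}}Q_{a}-\b{\tilde{B}_{k}+\Lambda_{k}\abs{\lambda-\lambda_{k}}}h_{\text{max}}^{p}\norm{\tilde{u}_{\lambda}}_{\H}^{2}.
\]
Multiplying the two estimates from the previous paragraph by $\norm{\tilde{u}_{\lambda}}_{\H}$ and dividing by this lower bound (positive precisely by the smallness hypothesis stated in the proposition) produces the two quotients claimed, with numerator constants $\tilde{C}_{k}=\tilde{B}_{k}/\sqrt{1-\theta_{k}}$ and $\tilde{\Lambda}_{k}=\Lambda_{k}/\sqrt{1-\theta_{k}}$. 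The main nuisance is bookkeeping: tracking how the single factor $\abs{\alpha_{k}-\lambda}$ from the boundary correction splits into a constant part absorbed into $\tilde{B}_{k}$ and a linear-in-$\abs{\lambda-\lambda_{k}}$ part producing $\Lambda_{k}$, and making sure the correction $g$ is smooth enough that $\hat{u}\in C_{\G}^{q}\b{\overline{S}}$ so that Assumptions \ref{enu:coercive}--\ref{enu:angle} apply to $w$.
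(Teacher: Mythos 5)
Your proposal follows the same route as the paper: construct the boundary correction $g$ via Assumption~(\ref{enu:well-posed}), project $\tilde{u}_{\lambda}-g$ onto the eigenspace, and use Assumptions~(\ref{enu:coercive}) and (\ref{enu:angle}) to separately control the eigenspace coefficient and the orthogonal residual before closing the argument with Eq.~(\ref{eq:normeq}). The only quibble is that your closing identification $\tilde{C}_{k}=\tilde{B}_{k}/\sqrt{1-\theta_{k}}$ uses the \emph{pre-relabelling} $\tilde{B}_{k}$ (bounding $\norm{(\F_{\G}-\lambda)\hat u}_{L^{2}(S)}$), while the $\tilde{B}_{k}$ appearing in the denominators must additionally absorb $\norm{g}_{L^{2}(S)}$ and a factor $1/B_{k}$ to bound $\norm{\tilde{u}_{\lambda}-v_{k}}_{L^{2}(S)}$, as in the paper's $\tilde{B}_{k}=\tilde{C}_{k}/B_{k}+C_{k}\tilde{A}$ --- a bookkeeping slip that you already flag and that does not affect the correctness of the argument.
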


\begin{proof}
Let $g,\alpha_{k}$ be such that
\begin{align}
\b{\F-\alpha_{k}}g & =0,\label{eq:gassump-1}\\
\G g\eval_{\partial S} & =\G\tilde{u}_{\lambda}\eval_{\partial S},\nonumber \\
\norm g_{L^{2}\b S} & \le C_{k}\norm{\G\tilde{u}_{\lambda}}_{H^{\tilde{s}}\b{\partial S}},\label{eq:gnorm}
\end{align}
where we use Eq. (\ref{eq:gassump}) from assumption \ref{enu:well-posed}. Now,
let $v_{k}$ be the projection of $\tilde{u}_{\lambda}-g$ onto $\mathcal{N}\b{\F_{\G}-\lambda_{k}}$
in $L^{2}\b S$ so that
\[
\tilde{u}_{\lambda}  = g+v_{k}+v_{k}^{\perp},
\]
where $v_{k}^{\perp} =\tilde{u}_{\lambda}-g-v_{k}\in\mathcal{N}\b{\F_{\G}-\lambda_{k}}^{\perp}$. Then, 
$\tilde{u}_{\lambda}-g$ is in $C_{\G}^{q}\b S$ (we specifically
define $g$ so that this is the case), as is $v_{k}$, so $v_{k}^{\perp}$
is also in $C_{\G}^{q}\b S$. Now,
\begin{align}
\norm{\mathcal{F}\tilde{u}_{\lambda}-\lambda\tilde{u}_{\lambda}}_{L^{2}\b S} & =\norm{\mathcal{F}\b{g+v_{k}+v_{k}^{\perp}}-\lambda\b{g+v_{k}+v_{k}^{\perp}}}_{L^{2}\b S}\nonumber \\
 & \ge\norm{\mathcal{F}_{\G}\b{v_{k}+v_{k}^{\perp}}-\lambda\b{v_{k}+v_{k}^{\perp}}}_{L^{2}\b S}-\norm{\b{\F-\lambda}g}_{L^{2}\b S}\nonumber \\
 & =\norm{\mathcal{F}_{\G}\b{v_{k}+v_{k}^{\perp}}-\lambda\b{v_{k}+v_{k}^{\perp}}}_{L^{2}\b S}-\abs{\alpha_{k}-\lambda}\norm g_{L^{2}\b S},\label{eq:split}
\end{align}
where we use Eq. (\ref{eq:gassump-1}) in the last line. Also,
\[
\mathcal{F}_{\G}\b{v_{k}+v_{k}^{\perp}}-\lambda\b{v_{k}+v_{k}^{\perp}}=\b{\lambda_{k}-\lambda}v_{k}-\b{\mathcal{F}_{\G}-\lambda}v_{k}^{\perp},
\]
using the fact that $v_{k}$ is an eigenfunction.
We now need assumptions \ref{enu:coercive} and \ref{enu:angle},
noting that $v_{k}^{\perp}\in\mathcal{N}\b{\F_{\G}-\lambda_{k}}^{\perp}\cap C_{\G}^{q}\b S$.

\begin{align}
&\quad\norm{\mathcal{F}_{\G}\b{v_{k}+v_{k}^{\perp}}-\lambda\b{v_{k}+v_{k}^{\perp}}}_{L^{2}\b S}^{2}\nonumber\\
& =\norm{\b{\lambda_{k}-\lambda}v_{k}-\b{\F_{\G}-\lambda}v_{k}^{\perp}}_{L^{2}\b S}^{2}\nonumber \\
 & \ge\b{1-\theta_{k}}\b{\abs{\lambda-\lambda_{k}}^{2}\norm{v_{k}}_{L^{2}\b S}^{2}+\norm{\b{\F_{\G}-\lambda}v_{k}^{\perp}}_{L^{2}\b S}^{2}}\nonumber \\
 & \ge\b{1-\theta_{k}}\b{\abs{\lambda-\lambda_{k}}^{2}\norm{v_{k}}_{L^{2}\b S}^{2}+B_{k}^{2}\norm{v_{k}^{\perp}}_{L^{2}\b S}^{2}}\label{eq:thetabound}
\end{align}
Using (\ref{eq:gassump}), (\ref{eq:Guassump}), and (\ref{eq:gnorm}),
we have
\begin{equation}
\norm g_{L^{2}\b S}\le C_{k}\tilde{A}h_{\text{max}}^{p}\norm{\tilde{u}_{\lambda}}_{\H}.\label{eq:gl2bound}
\end{equation}
Then, using (\ref{eq:Fuassumpt}), (\ref{eq:split}), (\ref{eq:thetabound}),
and (\ref{eq:gl2bound}), 
\begin{align*}
\sqrt{\b{1-\theta_{k}}\b{\abs{\lambda-\lambda_{k}}^{2}\norm{v_{k}}_{L^{2}\b S}^{2}+B_{k}^{2}\norm{v_{k}^{\perp}}_{L^{2}\b S}^{2}}} & \le Ah_{\text{max}}^{p}\norm{\tilde{u}_{\lambda}}_{\H}+\abs{\alpha_{k}-\lambda}\norm g_{L^{2}\b S}\\
 & \le \b{A+\abs{\alpha_{k}-\lambda}C_{k}\tilde{A}}h_{\text{max}}^{p}\norm{\tilde{u}_{\lambda}}_{\H}\\
\implies\sqrt{\abs{\lambda-\lambda_{k}}^{2}\norm{v_{k}}_{L^{2}\b S}^{2}+B_{k}^{2}\norm{v_{k}^{\perp}}_{L^{2}\b S}^{2}} & \le\b{\tilde{C}_{k}+\tilde{\Lambda}_{k}\abs{\lambda-\lambda_{k}}}h_{\text{max}}^{p}\norm{\tilde{u}_{\lambda}}_{\H}\text{,}
\end{align*}
where $\ensuremath{\tilde{C}_{k},\tilde{\Lambda}_{k}>0}$ are constants.
So,
\begin{align}
\abs{\lambda-\lambda_{k}}\norm{v_{k}}_{L^{2}\b S} & \le\b{\tilde{C}_{k}+\tilde{\Lambda}_{k}\abs{\lambda-\lambda_{k}}}h_{\text{max}}^{p}\norm{\tilde{u}_{\lambda}}_{\H},\label{eq:valdif}\\
\norm{v_{k}^{\perp}}_{L^{2}\b S} & \le\frac{\b{\tilde{C}_{k}+\tilde{\Lambda}_{k}\abs{\lambda-\lambda_{k}}}}{B_{k}}h_{\text{max}}^{p}\norm{\tilde{u}_{\lambda}}_{\H},\nonumber \\
\norm{\tilde{u}_{\lambda}-v_{k}}_{L^{2}\b S} & \le\norm{v_{k}^{\perp}}_{L^{2}\b S}+\norm g_{L^{2}\b S}\nonumber \\
 & \le\b{\frac{\b{\tilde{C}_{k}+\tilde{\Lambda}_{k}\abs{\lambda-\lambda_{k}}}}{B_{k}}+C_{k}\tilde{A}}h_{\text{max}}^{p}\norm{\tilde{u}_{\lambda}}_{\H},\nonumber 
\end{align}
where we use (\ref{eq:gl2bound}) again in the line above. Letting
$\tilde{B}_{k}=\frac{\tilde{C}_{k}}{B_{k}}+C_{k}\tilde{A}$ and $\Lambda_{k}=\frac{\tilde{\Lambda}_{k}}{B_{k}}$,
\begin{align}
\norm{\tilde{u}_{\lambda}-v_{k}}_{L^{2}\b S} & \le\b{\tilde{B}_{k}+\Lambda_{k}\abs{\lambda-\lambda_{k}}}h_{\text{max}}^{p}\norm{\tilde{u}_{\lambda}}_{\H}\nonumber \\
\implies\norm{v_{k}}_{L^{2}\b S} & \ge\norm{\tilde{u}_{\lambda}}_{L^{2}\b S}-\b{\tilde{B}_{k}+\Lambda_{k}\abs{\lambda-\lambda_{k}}}h_{\text{max}}^{p}\norm{\tilde{u}_{\lambda}}_{\H}\nonumber \\
 & \ge\frac{\max\cb{\abs{b_{j}}}Q_{a}}{\norm{\tilde{u}_{\lambda}}_{\H}}-\b{\tilde{B}_{k}+\Lambda_{k}\abs{\lambda-\lambda_{k}}}h_{\text{max}}^{p}\norm{\tilde{u}_{\lambda}}_{\H},\label{eq:vbelow}
\end{align}
where we use (\ref{eq:normeq}) in the last line. Therefore, whenever
$\b{\tilde{B}_{k}+\Lambda_{k}\abs{\lambda-\lambda_{k}}}h_{\text{max}}^{p}\norm{\tilde{u}_{\lambda}}_{\H}^{2}<\max\cb{\abs{b_{j}}}Q_{a}$,
\begin{align*}
\frac{\norm{\tilde{u}_{\lambda}-v_{k}}_{L^{2}\b S}}{\norm{v_{k}}_{L^{2}\b S}} & \le\frac{\b{\tilde{B}_{k}+\Lambda_{k}\abs{\lambda-\lambda_{k}}}h_{\text{max}}^{p}\norm{\tilde{u}_{\lambda}}_{\H}^{2}}{\max\cb{\abs{b_{j}}}Q_{a}-\b{\tilde{B}_{k}+\Lambda_{k}\abs{\lambda-\lambda_{k}}}h_{\text{max}}^{p}\norm{\tilde{u}_{\lambda}}_{\H}^{2}},
\end{align*}
as required. Finally, using (\ref{eq:valdif}) and (\ref{eq:vbelow}),
\[
\abs{\lambda-\lambda_{k}}\le\frac{\b{\tilde{C}_{k}+\tilde{\Lambda}_{k}\abs{\lambda-\lambda_{k}}}h_{\text{max}}^{p}\norm{\tilde{u}_{\lambda}}_{\H}^{2}}{\max\cb{\abs{b_{j}}}Q_{a}-\b{\tilde{B}_{k}+\Lambda_{k}\abs{\lambda-\lambda_{k}}}h_{\text{max}}^{p}\norm{\tilde{u}_{\lambda}}_{\H}^{2}},
\]
as required.
\end{proof}

Note in the case that there are eigenvalues $\lambda_{j}$ both larger
and smaller than $\lambda_{k}$, then $\tilde{C}_{k}+\tilde{\Lambda}_{k}\abs{\lambda-\lambda_{k}}$
and $\tilde{B}_{k}+\Lambda_{k}\abs{\lambda-\lambda_{k}}$ could be
bounded above by constants due to assumption \ref{enu:lambda}.

\subsubsection{\label{subsec:Discussion-of-Propositions}Discussion of Proposition
\ref{prop:If-all-assumptions}}

Proposition \ref{prop:If-all-assumptions} should not be interpreted
as $\tilde{u}_{\lambda}\to v_{k}$ for any $\lambda$; the point is
that $\norm{\tilde{u}_{\lambda}}_{\H}$ is bounded only when there
is a solution to (\ref{eq:fulleigprob}) that is also in $\H$, by
Proposition \ref{prop:Let--be}. Proposition \ref{prop:If-all-assumptions}
is then better understood as a \textit{divergence rate} estimate for
$\norm{\tilde{u}_{\lambda}}_{\H}$ when $\lambda$ is not an eigenvalue
or as a convergence rate for intervals in $\lambda$ where $\norm{\tilde{u}_{\lambda}}_{\H}$
is below a certain constant; such intervals must shrink as $\O\b{h_{\text{max}}^{\frac{p}{2}}}$.
More explicitly, since

\[
\abs{\lambda-\lambda_{k}}\le\frac{\b{\tilde{C}_{k}+\tilde{\Lambda}_{k}\abs{\lambda-\lambda_{k}}}h_{\text{max}}^{p}\norm{\tilde{u}_{\lambda}}_{\H}^{2}}{\max\cb{\abs{b_{j}}}Q_{a}-\b{\tilde{B}_{k}+\Lambda_{k}\abs{\lambda-\lambda_{k}}}h_{\text{max}}^{p}\norm{\tilde{u}_{\lambda}}_{\H}^{2}},
\]
as long as $\max\cb{\abs{b_{j}}}Q_{a}>\b{\tilde{B}_{k}+\Lambda_{k}\abs{\lambda_{k}-\lambda}}h_{\text{max}}^{p}\norm{\tilde{u}_{\lambda}}_{\H}^{2}$, a rearrangement gives:

\begin{align*}
\frac{\max\cb{\abs{b_{j}}}Q_{a}\abs{\lambda-\lambda_{k}}h_{\text{max}}^{-p}}{\tilde{C}_{k}+\tilde{\Lambda}_{k}\abs{\lambda-\lambda_{k}}+\b{\tilde{B}_{k}+\Lambda_{k}\abs{\lambda-\lambda_{k}}}\abs{\lambda-\lambda_{k}}} & \le\norm{\tilde{u}_{\lambda}}_{\H}^{2}.
\end{align*}

Notably, for $\lambda$ sufficiently close to $\lambda_{k}$,
$\norm{\tilde{u}_{\lambda}}_{\H}^{2}$ is bounded below by a constant multiple of ${\abs{\lambda-\lambda_k}h_\text{max}^{-p}}$
as $h_{\text{max}},\abs{\lambda-\lambda_k}\to0$; $\norm{\tilde{u}_{\lambda}}_{\H}^{2}$ is  $\Omega\b{\abs{\lambda-\lambda_k}h_\text{max}^{-p}}$ in asymptotic notation.
If $\max\cb{\abs{b_{j}}}Q_{a}\le\b{\tilde{B}_{k}+\Lambda_{k}\abs{\lambda_{k}-\lambda}}h_{\text{max}}^{p}\norm{\tilde{u}_{\lambda}}_{\H}^{2}$,
 $\norm{\tilde{u}_{\lambda}}_{\H}^{2}$ is bounded below by a multiple of $h_\text{max}^{-p}$ as well. That is, in either case, $\norm{\tilde{u}_{\lambda}}_{\H}^{2}$ diverges as $h_\text{max}\to0$ as long as $\lambda\ne\lambda_k$. 
In the case that $\lambda=\lambda_{k}$, Proposition \ref{prop:If-all-assumptions}
says that $\norm{\tilde{u}_{\lambda_{k}}-v_{k}}_{L^{2}\b S}\to0$
as $\O\b{h_{\text{max}}^{p}}$, as long as there exists an extension
of an exact eigenfunction to $\H$ so that $\norm{\tilde{u}_{\lambda_{k}}}_{\H}$
is bounded (by the $\H$-norm of an exact, extended eigenfunction).

Now, recall that $\tilde{u}_{\lambda}^{\b{\tilde{N}}}$ is the solution
to problem (\ref{eq:opt1-1-1}) for $\tilde{N}$ points in $S$, and assume that $\tilde{N}_{\partial}\propto\tilde{N}^{\frac{m-1}{m}}$ and that $h_{\text{max}}=\O\b{\tilde{N}^{-\frac{1}{m}}}$, where
$m$ is the dimension of $S$ and $\tilde{N}_\partial$ is again the number of points on $\partial S$.
Then, the earlier Proposition \ref{prop:Let--be} says that
\begin{align}
\lim_{\tilde{N}_{2}\to\infty}\frac{\norm{\tilde{u}_{\lambda}^{\b{\tilde{N}_{2}}}}_{\H}}{\norm{u_{\lambda}^{\b{\tilde{N}_{1}}}}_{\H}} & =\infty,\label{eq:lim1}
\end{align}
for any $\tilde{N}_{1}$, when $\lambda$ is not an eigenvalue, and
\begin{align}
\lim_{\tilde{N}_{1}\to\infty}\b{\lim_{\tilde{N}_{2}\to\infty}\frac{\norm{\tilde{u}_{\lambda}^{\b{\tilde{N}_{2}}}}_{\H}}{\norm{\tilde{u}_{\lambda}^{\b{\tilde{N}_{1}}}}_{\H}}} & =1,\label{eq:lim2}
\end{align}
when $\lambda$ is an eigenvalue, as long as there is an eigenfunction for $\lambda$
that can be extended to some $u\in\H$. Limits (\ref{eq:lim1}) and (\ref{eq:lim2}) suggest that finding intervals where the norm ratio is less than some constant greater than one could be used to find intervals containing an eigenvalue. This is used for error estimation later in \ref{subsubsec:ee}.
Furthermore, using Proposition \ref{prop:If-all-assumptions},
\begin{equation}
\norm{\tilde{u}_{\lambda}^{\b{\tilde{N}}}}_{\H}\ge K_{k,\vc a}{\tilde{N}^{\frac{p}{2m}}\sqrt{\abs{\lambda-\lambda_{k}}}},\label{eq:divrate}
\end{equation}
for some constant $K_{k,\vc a}>0$, which gives the divergence rate of the first limit as $\tilde{N}\to\infty$.

It should also be noted that instead of using (\ref{eq:normeq}) to get Eq. (\ref{eq:vbelow}) in the proof of Proposition \ref{prop:If-all-assumptions}, we could use the fact that
\begin{align}\label{eq:Hlambdabound}
\norm{\tilde{u}_{\lambda} - \tilde{u}_{\lambda_k}}_\H \le Z_{k,\tilde{N}}\abs{\lambda - \lambda_k},
\end{align}
for some constant $Z_{k,\tilde{N}}>0$ depending on the point cloud, as long as $\abs{\lambda-\lambda_k}$ is sufficiently small.
This is fairly straightforward to show by writing the $\L$ operator from Subsection \ref{subsec:Functional-Analysis-Background} as $\L_k^{\b 0}+\b{\lambda -\lambda_k}\L_k^{\b 1}$, where $\L_k^{\b 0}$ and $\L_k^{\b 1}$ are independent of $\lambda$, then using Eq. (\ref{eq:kernel}) and noting that $\vc f$ does not depend on $\lambda$. Noting that for sufficiently small $\abs{\lambda - \lambda_k}$, $\norm{\b{\L_{k}^{\b 0}}^{+}-\b{\L_{k}^{\b 0}+\b{\lambda-\lambda_{k}}\L_{k}^{\b 1}}^{+}}=\O\b{\abs{\lambda-\lambda_{k}}}$ then gives (\ref{eq:Hlambdabound}). Assuming the $\H$ norm dominates the $L^2$ norm, $\norm{\tilde{u}_\lambda}_{L^2\b S} \ge \norm{\tilde{u}_{\lambda_k}}_{L^2\b S} - \tilde{Z}_{k,\tilde{N}}\abs{\lambda - \lambda_k}$ for some constant $\tilde{Z}_{k,\tilde{N}}>0$.
Using this, rather than (\ref{eq:normeq}), in the line before (\ref{eq:vbelow}) shows that when $\abs{\lambda-\lambda_k}$ is small and the point cloud is fixed, $\norm{\tilde{u}_\lambda}_{\H}$ rather than $\norm{\tilde{u}_\lambda}^2_{\H}$ is $\Omega\b{\abs{\lambda-\lambda_k}}$ as $\lambda\to\lambda_k$; this is the behaviour observed in practice.

Equations (\ref{eq:lim1})--(\ref{eq:divrate}) suggest that examining
$\norm{\tilde{u}_{\lambda}^{\b{\tilde{N}}}}_{\H}$ (or a ratio of
norms) can reveal eigenvalues. In particular, while $\norm{\tilde{u}_{\lambda}^{\b{\tilde{N}}}}_{\H}$
is bounded when $\lambda$ is an eigenvalue, it diverges at a potentially
high-order rate when $\lambda$ is not an eigenvalue. These observations
are used to find eigenvalues for a variety of problems in Section
\ref{sec:Application-to-Eigenvalue}.

\subsection{Analysis for Steklov Problems\label{subsec:Analysis-for-Steklov}}

Next, we analyze Steklov eigenvalue problems of the form

\begin{align}
\mathcal{F}u & =0\text{ on \ensuremath{S}},\label{eq:fulleigprob-1}\\
\b{\G u-\lambda u}\eval_{\partial S} & =0,\nonumber \\
u\b{\vc a_{j}} & =b_{j}\text{, for each \ensuremath{j\in\cb{1,2,\ldots,\tilde{N}_{a}}}},\nonumber 
\end{align}
where $\cb{\vc a_{j}}_{j=1}^{\tilde{N}_{a}}$ is a collection of points
on $\partial S$ and $\cb{b_{j}}_{n=1}^{\tilde{N}_{a}}$ is a set
of values such that at least one $b_{j}$ is non-zero. $\F$ and $\G$ are
again linear differential operators of orders $q$ and $\tilde{q}$,
respectively, such that $\tilde{q}\le q$.
Note that we must assume that $\vc a_{j}\in\partial S$
for this analysis. We define the Steklov eigenspace $E_{\lambda}$
to be the space of functions 
\[
E_{\lambda}:=\cb{u\in C^{q}\b{\overline{S}}:\F u=0,\text{\ensuremath{\b{\G u-\lambda u}\eval_{\partial S}}}=0}.
\]
We also define $E_{\lambda}\eval_{\partial S}\subset L^{2}\b{\partial S}$
to be the same space, restricted to the boundary of $S$. 

\subsubsection{Assumptions\label{subsec:Assumptions-1}}

We again need a handful of assumptions, as in Subsection \ref{subsec:Convergence-Rate-Analysis}.
Note that we will frequently omit $\evals_{\partial S}$ when the
meaning is otherwise clear; if $u$ is a function on $S$ that can
be continuously extended to $\partial S$, then $\norm u_{L^{2}\b{\partial S}}$
is taken to mean $\norm{u\evals_{\partial S}}_{L^{2}\b{\partial S}}$. Again, we note that these assumptions are satisfied by elliptic operators on sufficiently regular domains, such as those encountered in the numerical tests in Section \ref{sec:Application-to-Eigenvalue}.
\begin{enumerate}
\item \label{enu:stable_stek}There is a stable Robin (or Neumann) problem
for $\F$. Specifically, if $f\in L^{2}\b S$,
then there exist constants $B,\tilde{B}>0$ and $\alpha_{k}\in\C$ such that
if $\F u=f$ and $\b{\G-\alpha_{k}}u\eval_{\partial S}=0$, then $\norm u_{L^{2}\b{\partial S}}\le\tilde{B}\norm f_{L^{2}\b S}$. %
Also, there exists a linear extension map $\mathcal{E}:C^{\tilde{q}}\b{\partial S}\to L^{2}\b S\cap\mathcal{N}\b{\mathcal{F}}$
such that $\b{\G-\alpha_{k}}\b{\mathcal{E}g}\eval_{\partial S}=g$
for any $g\in C^{\tilde{q}}\b{\partial S}$. 
\item \label{enu:count_stek}There is a countable set of values $\cb{\lambda_{j}}$
(the Steklov eigenvalues) for which a solution $u\in C^{q}\b{\overline{S}}$
to (\ref{eq:fulleigprob-1}) exists.
\item \label{enu:closed_stek}For a specific $\lambda_{k}$ of interest,
the eigenspace $E_{\lambda_{k}}\eval_{\partial S}$ is closed as a
subspace of $L^{2}\b{\partial S}$ (finite-dimensional, for example).
\item \label{enu:lambda_stek}$\lambda$ is closer to $\lambda_{k}$ than
to any other Steklov eigenvalue in $\cb{\lambda_{j}}$.
\item \label{enu:coercive_stek}$\G-\lambda$ is uniformly coercive on $\b{E_{\lambda_{k}}\eval_{\partial S}}^{\perp}\cap\mathcal{N}\b{\F}\eval_{\partial S}$
for each $\lambda$ satisfying point \ref{enu:lambda_stek}, in the
sense that if $g\in C^{\tilde{q}}\b{\partial S}$, $g\perp E_{\lambda}\eval_{\partial S}$,
and there exists some $v^{\perp}\in\mathcal{N}\b{\F}$ so that $v^{\perp}\eval_{\partial S}=g$,
then there is some $C_{k}>0$ such that $\norm{\b{\G-\lambda}v^{\perp}}_{L^{2}\b{\partial S}}\ge C_{k}\norm{v^{\perp}}_{L^{2}\b{\partial S}}=C_{k}\norm g_{L^{2}\b{\partial S}}$.
This is the Steklov equivalent of assumption \ref{enu:coercive}
from \ref{subsec:Assumptions}. Note that $\mathcal{N}\b{\F}\eval_{\partial S}$
is equal to $L^{2}\b{\partial S}$ only when $\F u=0,u\eval_{\partial S}=g$
is solvable for any $g\in L^{2}\b{\partial S}$, which we do not require.
In particular, we want to consider the case $\F=-\Delta-\mu^{2}$,
where $\mu^{2}$ is a Dirichlet eigenvalue of the Laplacian. 
\item \label{enu:angle_stek}If $g\in\b{E_{\lambda_{k}}\eval_{\partial S}}^{\perp}\cap\mathcal{N}\b{\F}\eval_{\partial S}$,
then there is some $\theta_{k}\in\left[0,1\right)$ such that for
all $u\in E_{\lambda_{k}}$, $\abs{\b{\b{\G-\lambda}\b{\mathcal{E}g},u}_{L^{2}\b{\partial S}}}\ge\theta_{k}\norm{\b{\G-\lambda}\b{\mathcal{E}g}}_{L^{2}\b{\partial S}}\norm u_{L^{2}\b{\partial S}}$.
\end{enumerate}
~

For the discretized problem, we again consider dense point clouds
$\cb{\x_{j}}_{j=1}^{\infty}\subset S$ and $\cb{\vc y_{j}}_{j=1}^{\infty}\subset\partial S$
such that the fill distance $h_{\text{max}}$ of $\cb{\x_{j}}_{j=1}^{\tilde{N}}$
in $S$ and $\cb{\vc y_{j}}_{j=1}^{\tilde{N}_{\partial}}$ in $\partial S$
goes to zero as $\tilde{N},\tilde{N}_{\partial}\to\infty$. The discretized
Steklov problem is
\begin{align}
\text{minimize, over }u\in\H:\, & \norm u_{\H},\label{eq:opt1-1-1-1}\\
\text{subject to }\, & \mathcal{F}u\b{\x_{j}}=0\text{, for each \ensuremath{j\in\cb{1,2,\ldots,\tilde{N}}}},\nonumber \\
 & \mathcal{G}u\b{\vc y_{j}}-\lambda u\b{\vc y_{j}}=0\text{, for each \ensuremath{j\in\cb{1,2,\ldots,\tilde{N}_{\partial}}}},\nonumber \\
 & u\b{\vc a_{j}}=b_{j}\text{, for each \ensuremath{j\in\cb{1,2,\ldots,\tilde{N}_{a}}}}.\nonumber 
\end{align}

We assume $\H$ is such that there are constants $A,\tilde{A},p,Q_{a}>0$
such that the solution $\tilde{u}_{\lambda}$ to problem (\ref{eq:opt1-1-1-1})
satisfies,
\begin{align}
\norm{\mathcal{F}\tilde{u}_{\lambda}}_{L^{2}\b S} & \le Ah_{\text{max}}^{p}\norm{\tilde{u}_{\lambda}}_{\H},\label{eq:Fuassumpt-1}\\
\norm{\G\tilde{u}_{\lambda}-\lambda\tilde{u}_{\lambda}}_{L^{2}\b{\partial S}} & \le\tilde{A}h_{\text{max}}^{p}\norm{\tilde{u}_{\lambda}}_{\H},\label{eq:Guassump-1}\\
\norm{\tilde{u}_{\lambda}}_{L^{2}\b{\partial S}} & \ge\frac{\max\cb{\abs{b_{j}}}Q_{a}}{\norm{\tilde{u}_{\lambda}}_{\H}},\label{eq:normeq-1}
\end{align}
for sufficiently small $h_{\text{max}}$.

\subsubsection{Convergence-Divergence Result for Well-Posed Steklov Eigenvalue Problem}
\begin{proposition}
\label{prop:If-all-assumptions-1}If all assumptions from Subsection
\ref{subsec:Assumptions-1} hold, then there exist constants $K_{k},\tilde{K}_{k},\beta_{k},\tilde{\beta}_{k}>0$
such that for small enough $h_{\text{max}}$, there exists an eigenfunction
$v_{k}\in E_{\lambda}$ such that, if $\max\cb{\abs{b_{j}}}Q_{a}-\b{K_{k}+\beta_{k}\abs{\lambda-\lambda_{k}}}h_{\text{max}}^{p}\norm{\tilde{u}_{\lambda}}_{\H}^{2}$,
\begin{align*}
\abs{\lambda-\lambda_{k}} & \le\frac{\b{\tilde{K}_{k}+\tilde{\beta}_{k}\abs{\lambda-\lambda_{k}}}h_{\text{max}}^{p}\norm{\tilde{u}_{\lambda}}_{\H}^{2}}{\max\cb{\abs{b_{j}}}Q_{a}-\b{K_{k}+\beta_{k}\abs{\lambda-\lambda_{k}}}h_{\text{max}}^{p}\norm{\tilde{u}_{\lambda}}_{\H}^{2}},\\
\frac{\norm{\tilde{u}_{\lambda}-v_{k}}_{L^{2}\b{\partial S}}}{\norm{v_{k}}_{L^{2}\b{\partial S}}} & \le\frac{\b{K_{k}+\beta_{k}\abs{\lambda-\lambda_{k}}}h_{\text{max}}^{p}\norm{\tilde{u}_{\lambda}}_{\H}^{2}}{\max\cb{\abs{b_{j}}}Q_{a}-\b{K_{k}+\beta_{k}\abs{\lambda-\lambda_{k}}}h_{\text{max}}^{p}\norm{\tilde{u}_{\lambda}}_{\H}^{2}},
\end{align*}
where $\tilde{u}_{\lambda}$ is the solution to problem (\ref{eq:opt1-1-1-1}).
\end{proposition}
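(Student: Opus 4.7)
The plan is to follow the structure of the proof of Proposition \ref{prop:If-all-assumptions}, with the roles of the interior $S$ and boundary $\partial S$ interchanged: the interior residual $\F\tilde{u}_\lambda$ will be absorbed by solving a stable Robin problem using assumption (\ref{enu:stable_stek}), and the remaining analysis will take place in $L^2\b{\partial S}$ rather than $L^2\b S$.

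First, I would invoke assumption (\ref{enu:stable_stek}) to construct $g$ satisfying $\F g = \F\tilde{u}_\lambda$ with $\b{\G-\alpha_k}g\eval_{\partial S}=0$; the stability estimate combined with (\ref{eq:Fuassumpt-1}) gives $\norm g_{L^2\b{\partial S}}\le\tilde{B}Ah_{\text{max}}^p\norm{\tilde{u}_\lambda}_\H$. Setting $w:=\tilde{u}_\lambda-g$, we have $w\in\mathcal{N}\b\F$. Since $E_{\lambda_k}\eval_{\partial S}$ is closed in $L^2\b{\partial S}$ by assumption (\ref{enu:closed_stek}), decompose $w\eval_{\partial S}=v_k\eval_{\partial S}+g^\perp$ orthogonally, where $v_k\in E_{\lambda_k}$ and $g^\perp\in\b{E_{\lambda_k}\eval_{\partial S}}^\perp$. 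Because $w-v_k\in\mathcal{N}\b\F$, the trace $g^\perp$ lies in $\mathcal{N}\b\F\eval_{\partial S}$, as required to invoke assumptions (\ref{enu:coercive_stek}) and (\ref{enu:angle_stek}).

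Next, on $\partial S$, using $\G g=\alpha_k g$ and $\G v_k=\lambda_k v_k$,
\begin{align*}
\b{\G-\lambda}\tilde{u}_\lambda=\b{\alpha_k-\lambda}g+\b{\lambda_k-\lambda}v_k+\b{\G-\lambda}\b{w-v_k}.
\end{align*}
The residual bound (\ref{eq:Guassump-1}), together with the $L^2\b{\partial S}$ estimate on $g$ and the triangle inequality, yields
\begin{align*}
\norm{\b{\lambda_k-\lambda}v_k+\b{\G-\lambda}\b{w-v_k}}_{L^2\b{\partial S}}\le\b{\tilde{K}_k+\tilde{\beta}_k\abs{\lambda-\lambda_k}}h_{\text{max}}^p\norm{\tilde{u}_\lambda}_\H
\end{align*}
for suitable constants $\tilde{K}_k,\tilde{\beta}_k>0$ involving $A,\tilde{A},\tilde{B},\alpha_k,\lambda_k$. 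Applying the angle assumption (\ref{enu:angle_stek}) (interpreting its displayed inequality as an upper bound, in parallel with the corresponding step in Proposition \ref{prop:If-all-assumptions}) together with the coercivity (\ref{enu:coercive_stek}) then separates this estimate into individual bounds on $\abs{\lambda-\lambda_k}\norm{v_k}_{L^2\b{\partial S}}$ and on $\norm{g^\perp}_{L^2\b{\partial S}}$, each by a constant multiple of $\b{\tilde{K}_k+\tilde{\beta}_k\abs{\lambda-\lambda_k}}h_{\text{max}}^p\norm{\tilde{u}_\lambda}_\H$.

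Finally, the triangle inequality yields $\norm{\tilde{u}_\lambda-v_k}_{L^2\b{\partial S}}\le\norm g_{L^2\b{\partial S}}+\norm{g^\perp}_{L^2\b{\partial S}}\le\b{K_k+\beta_k\abs{\lambda-\lambda_k}}h_{\text{max}}^p\norm{\tilde{u}_\lambda}_\H$ for suitable $K_k,\beta_k>0$, and (\ref{eq:normeq-1}) combined with the reverse triangle inequality gives $\norm{v_k}_{L^2\b{\partial S}}\ge\max\cb{\abs{b_j}}Q_a/\norm{\tilde{u}_\lambda}_\H-\b{K_k+\beta_k\abs{\lambda-\lambda_k}}h_{\text{max}}^p\norm{\tilde{u}_\lambda}_\H$. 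Dividing then produces both inequalities stated in the proposition, exactly as in the final step of Proposition \ref{prop:If-all-assumptions}. I expect the main technical obstacle to be verifying that the coercivity and angle assumptions, which are phrased in terms of the extension map $\mathcal{E}$ and of a generic $v^\perp\in\mathcal{N}\b\F$, apply to the specific function $w-v_k$ constructed here; this reduces to the observation that either the trace-to-$\mathcal{N}\b\F$ extension is unique on the boundary traces arising from our decomposition, or the relevant inequalities hold uniformly across all such extensions of a given boundary trace.
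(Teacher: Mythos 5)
Your proposal is correct and follows essentially the same route as the paper's own proof: your $g$ is the paper's $\tilde{u}_\lambda - v$ and your $w$ is the paper's $v = \mathcal{E}\b{\b{\G-\alpha_k}\tilde{u}_\lambda\eval_{\partial S}}$, so the decomposition, the use of the Robin stability estimate, the orthogonal split on $\partial S$, and the final division are all identical in substance. You also correctly flag (and resolve in the same way the paper implicitly does) that the displayed inequality in assumption (\ref{enu:angle_stek}) must be read as an upper bound, and that the coercivity/angle assumptions need to be applied to the specific representative $w-v_k\in\mathcal{N}\b\F$ rather than a generic $\mathcal{E}$-extension.
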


\begin{proof}
Using (\ref{eq:Fuassumpt-1}), we have
\[
\norm{\mathcal{F}\tilde{u}_{\lambda}}_{L^{2}\b S}\le Ah_{\text{max}}^{p}\norm{\tilde{u}_{\lambda}}_{\H}.
\]
Let $v=\mathcal{E}\b{\b{\G\tilde{u}_{\lambda}-\alpha_{k}\tilde{u}_{\lambda}}\eval_{\partial S}}$
so that $\b{\G\tilde{u}_{\lambda}-\alpha_{k}\tilde{u}_{\lambda}}\eval_{\partial S}=\b{\G v-\alpha_{k}v}\eval_{\partial S}$
and $\F v=0$ on $S$, where $\alpha_k$ is a constant that satisfies assumption \ref{enu:well-posed} from Subsection \ref{subsec:Assumptions-1}. Using assumption \ref{enu:well-posed}, we then have
\begin{align}
\norm{\tilde{u}_{\lambda}-v}_{L^{2}\b{\partial S}} & \le\tilde{B}Ah_{\text{max}}^{p}\norm{\tilde{u}_{\lambda}}_{\H}\label{eq:l2uvbound}\\
\implies\norm{\G\tilde{u}_{\lambda}-\G v}_{L^{2}\b{\partial S}} & \le\alpha_{k}\tilde{B}Ah_{\text{max}}^{p}\norm{\tilde{u}_{\lambda}}_{\H}\nonumber \\
\implies\norm{\b{\G-\lambda}\b{\tilde{u}_{\lambda}-v}}_{L^{2}\b{\partial S}} & \le\b{\abs{\alpha_{k}}+\abs{\lambda}}\tilde{B}Ah_{\text{max}}^{p}\norm{\tilde{u}_{\lambda}}_{\H}.\label{eq:g-lambound}
\end{align}
Now, using (\ref{eq:Guassump-1}),
\begin{align}
\norm{\G\tilde{u}_{\lambda}-\lambda\tilde{u}_{\lambda}}_{L^{2}\b{\partial S}} & \le\tilde{A}h_{\text{max}}^{p}\norm{\tilde{u}_{\lambda}}_{\H}\nonumber\\
\implies\norm{\G v-\lambda v}_{L^{2}\b{\partial S}} & \le\b{\tilde{A}+\b{\abs{\alpha_{k}}+\abs{\lambda}}\tilde{B}A}h_{\text{max}}^{p}\norm{\tilde{u}_{\lambda}}_{\H}\label{eq:glam},
\end{align}
where we used the bound on $\norm{\b{\G-\lambda}\b{\tilde{u}_{\lambda}-v}}_{L^{2}\b{\partial S}}$
from (\ref{eq:g-lambound}).

We now write $v=v_{k}+v_{k}^{\perp}$, where $v_{k}\in E_{\lambda_{k}}$.
This is done by projecting $v\eval_{\partial S}$ onto $E_{\lambda_{k}}\eval_{\partial S}$,
letting $v_{k}$ be a
function in $\mathcal{N}\b{\F}$ so that
$v_{k}\eval_{\partial S}$ is equal to this projection (recall that
$E_{\lambda_{k}}\subset\mathcal{N}\b{\F}$), then letting $v_{k}^{\perp}=v-v_{k}$.
Note that $v_{k}^{\perp}\in\mathcal{N}\b{\F}$ by construction and
$v_{k}^{\perp}\eval_{\partial S}\in\b{E_{\lambda_{k}}\eval_{\partial S}}^{\perp}$,
satisfying the conditions for assumption \ref{enu:coercive_stek}. Then, using (\ref{eq:glam}),
\begin{align*}
\norm{\G\b{v_{k}+v_{k}^{\perp}}-\lambda\b{v_{k}+v_{k}^{\perp}}}_{L^{2}\b{\partial S}} & \le\b{\tilde{A}+\b{\abs{\alpha_{k}}+\abs{\lambda}}\tilde{B}A}h_{\text{max}}^{p}\norm{\tilde{u}_{\lambda}}_{\H}.
\end{align*}
Therefore, since $\G v_{k}\eval_{\partial S}=\lambda_{k}v_{k}\eval_{\partial S}$,
\begin{align}
 & \quad\norm{\b{\lambda_{k}-\lambda}v_{k}+\b{\G-\lambda}v_{k}^{\perp}}_{L^{2}\b{\partial S}}\nonumber \\
 & \le\b{\tilde{A}+\b{\abs{\alpha_{k}}+\abs{\lambda}}\tilde{B}A}h_{\text{max}}^{p}\norm{\tilde{u}_{\lambda}}_{\H}\nonumber \\
\implies & \quad\sqrt{\b{1-\theta_{k}}\b{\abs{\lambda-\lambda_{k}}\norm{v_{k}}_{L^{2}\b{\partial S}}^{2}+C_{k}\norm{v_{k}^{\perp}}_{L^{2}\b{\partial S}}^{2}}}\label{eq:onemtheta}\\
 & \le\b{\tilde{A}+\b{\abs{\alpha_{k}}+\abs{\lambda}}\tilde{B}A}h_{\text{max}}^{p}\norm{\tilde{u}_{\lambda}}_{\H},\nonumber 
\end{align}
where we use assumptions \ref{enu:coercive_stek} and \ref{enu:angle_stek}
from Subsection \ref{subsec:Assumptions-1}. Finally, with new constants $K_{k},\beta_{k}>0$,
we use (\ref{eq:l2uvbound}) and (\ref{eq:onemtheta}) to see that
\begin{align}
\norm{\tilde{u}_{\lambda}-v_{k}}_{L^{2}\b{\partial S}} & \le\norm{v_{k}^{\perp}}_{L^{2}\b{\partial S}}+\norm{\tilde{u}_{\lambda}-v}_{L^{2}\b{\partial S}}\nonumber\\
&\le\b{K_{k}+\beta_{k}\abs{\lambda-\lambda_{k}}}h_{\text{max}}^{p}\norm{\tilde{u}_{\lambda}}_{\H}\label{eq:partialSbound} \\
\implies\norm{v_{k}}_{L^{2}\b{\partial S}} & \ge\norm{\tilde{u}_{\lambda}}_{L^{2}\b{\partial S}}-\b{K_{k}+\beta_{k}\abs{\lambda-\lambda_{k}}}h_{\text{max}}^{p}\norm{\tilde{u}_{\lambda}}_{\H}\nonumber \\
 & \ge\frac{\max\cb{\abs{b_{j}}}Q_{a}}{\norm{\tilde{u}_{\lambda}}_{\H}}-\b{K_{k}+\beta_{k}\abs{\lambda-\lambda_{k}}}h_{\text{max}}^{p}\norm{\tilde{u}_{\lambda}}_{\H}.\label{eq:vkbound}
\end{align}
Therefore, whenever $\max\cb{\abs{b_{j}}}Q_{a}\ge\b{K_{k}+\beta_{k}\abs{\lambda-\lambda_{k}}}h_{\text{max}}^{p}\norm{\tilde{u}_{\lambda}}_{\H}^{2}$,
\[
\frac{\norm{\tilde{u}_{\lambda}-v_{k}}_{L^{2}\b{\partial S}}}{\norm{v_{k}}_{L^{2}\b{\partial S}}}\le\frac{\b{K_{k}+\beta_{k}\abs{\lambda-\lambda_{k}}}h_{\text{max}}^{p}\norm{\tilde{u}_{\lambda}}_{\H}^{2}}{\max\cb{\abs{b_{j}}}Q_{a}-\b{K_{k}+\beta_{k}\abs{\lambda-\lambda_{k}}}h_{\text{max}}^{p}\norm{\tilde{u}_{\lambda}}_{\H}^{2}}.
\]
With other constants $\tilde{K}_{k}$, $\tilde{\beta}_{k}$, we
use (\ref{eq:onemtheta}) and (\ref{eq:vkbound}) to conclude
\[
\abs{\lambda-\lambda_{k}}\le\frac{\b{\tilde{K}_{k}+\tilde{\beta}_{k}\abs{\lambda-\lambda_{k}}}h_{\text{max}}^{p}\norm{\tilde{u}_{\lambda}}_{\H}^{2}}{\max\cb{\abs{b_{j}}}Q_{a}-\b{K_{k}+\beta_{k}\abs{\lambda-\lambda_{k}}}h_{\text{max}}^{p}\norm{\tilde{u}_{\lambda}}_{\H}^{2}}.
\]
\end{proof} 

Note that Eq. (\ref{eq:Guassump-1}) along with the bound on $\norm{\tilde{u}_{\lambda}-v_{k}}_{L^{2}\b{\partial S}}$ from (\ref{eq:partialSbound})
can be used to find a bound on $\norm{\tilde{u}_{\lambda}-v_{k}}_{L^{2}\b S}$
in the case that the $\G - \alpha_k$ Robin problem for $\F$ is suitably well-posed.

\section{Eigenvalue Problem Examples\label{sec:Application-to-Eigenvalue}}

\subsection{General Implementation}\label{subsec:genimpl}

We now consider problems of the form:
\begin{align}
\F u-\lambda\tilde{\F}u & =0\text{ on }S,\label{eq:eigpde}\\
\b{\G u-\lambda\tilde{\G}u}\eval_{\partial S} & =0.\nonumber 
\end{align}
where $\F,\G,\tilde{\F},\tilde{\G}$ are linear differential operators.
Let $\cb{\vc a_{j}}_{j=1}^{\tilde{N}_{a}}\subset S$ and $\cb{b_{j}}_{j=1}^{\tilde{N}_{a}}\in\R$
again (where at least one $b_{j}$ is non-zero), then the discretized
setup for these problems is
\begin{align}
\text{minimize, over }u\in\H: & \norm u_{\H},\label{eq:eigdisc}\\
\text{subject to } & \b{\F u-\lambda\tilde{\F}u}\b{\x_{j}}=0\text{, for each \ensuremath{j\in\cb{1,2,\ldots,\tilde{N}}}},\nonumber \\
 & \b{\G u-\lambda\tilde{\G}u}\b{\vc y_{j}}=0\text{, for each }j\in\cb{1,2,\ldots,\tilde{N}_{\partial}},\nonumber \\
 & u\b{\vc a_{j}}=b_{j}\text{, for each }j\in\cb{1,2,\ldots,\tilde{N}_{a}}.\nonumber 
\end{align}
When $\lambda\in\R$, the $\vc{\Phi}$ matrix for this problem, as
described in Subsection \ref{subsec:Direct-Solutions}, can be written
\[
\vc{\Phi}=\lambda^{2}\vc{\Phi}_{2}+\lambda\vc{\Phi}_{1}+\vc{\Phi}_{0},
\]
where $\vc{\Phi}_{2},\vc{\Phi}_{1}$, and $\vc{\Phi}_{0}$ are independent
of $\lambda$ and $\vc{\Phi}_{2},\vc{\Phi}_{0}$ are positive definite.

Given the solution $u_{\lambda}^{\b{\tilde{N}}}=\L^{*}\vc{\beta}$
to (\ref{eq:eigdisc}) (see Eq. (\ref{eq:finspan})), the linear
system for $\vc{\beta}$ can be written
\[
\vc{\Phi}\vc{\beta}=\vc g,
\]
where $\vc g$ has at most $\tilde{N}_{a}$ non-zero entries (and
at least one, since at least one of the $b_{j}$ values must be non-zero).
Recall that we can then compute
\[
\norm{u_{\lambda}^{\b{\tilde{N}}}}_{\H}^{2}=\vc{\beta}^{*}\vc{\Phi}\vc{\beta}=\vc g^{*}\vc{\beta}.
\]
We can also evaluate $\vc{\beta}^{*}\vc{\Phi}\vc{\beta}$ as

\begin{align}\label{eq:lamsplit}
\lambda^{2}\vc{\beta}^{*}\vc{\Phi}_{2}\vc{\beta}+\lambda\vc{\beta}^{*}\vc{\Phi}_{1}\vc{\beta}+\vc{\beta}^{*}\vc{\Phi}_{0}\vc{\beta},
\end{align}
and
\begin{align*}
\frac{\d}{\d\lambda}\norm{u_{\lambda}^{\b{\tilde{N}}}}_{\H}^{2} & =\vc g^{*}\frac{\d\vc{\Phi}^{-1}}{\d\lambda}\vc g
 =-\vc g^{*}\vc{\Phi}^{-1}\frac{\d\vc{\Phi}}{\d\lambda}\vc{\Phi}^{-1}\vc g
=-\vc g^{*}\vc{\Phi}^{-1}\b{2\lambda\vc{\Phi}_{2}+\vc{\Phi}_{1}}\vc{\Phi}^{-1}\vc g.
\end{align*}
Note that $\frac{\d\vc{\Phi}^{-1}}{\d\lambda}=-\vc{\Phi}^{-1}\frac{\d\vc{\Phi}}{\d\lambda}\vc{\Phi}^{-1}$
is simple to show since $\frac{\d}{\d\lambda}\b{\vc{\Phi}\vc{\Phi}^{-1}}=\vc 0$.
Then,
\[
\frac{\d^{2}}{\d\lambda^{2}}\norm{u_{\lambda}^{\b{\tilde{N}}}}_{\H}^{2}=2\vc g^{*}\vc{\Phi}^{-1}\frac{\d\vc{\Phi}}{\d\lambda}\vc{\Phi}^{-1}\frac{\d\vc{\Phi}}{\d\lambda}\vc{\Phi}^{-1}\vc g-2\vc g^{*}\vc{\Phi}^{-1}\vc{\Phi}_{2}\vc{\Phi}^{-1}\vc g.
\]
Since true eigenvalues should be located near local minima of $\norm{u_{\lambda}^{\b{\tilde{N}}}}_{\H}^{2}$,
this lets us use Newton's method applied to $\frac{\d}{\d\lambda}\norm{u_{\lambda}^{\b{\tilde{N}}}}_{\H}^{2}$ to find eigenvalues and lets us test concavity
to ensure that we are at a minimum rather than a maximum. As $\tilde{N}\to\infty$,
$\norm{u_{\lambda}^{\b{\tilde{N}}}}_{\H}^{2}$ will be unbounded except
at true eigenvalues for problem (\ref{eq:eigpde}) due to Proposition \ref{prop:Let--be}.

The main advantage of using the $\vc\Phi$ matrix to find eigenvalues is that the matrices $\vc\Phi_0,\vc\Phi_1$, and $\vc\Phi_2$ in Eq. (\ref{eq:lamsplit}) are independent of $\lambda$, and therefore must only be computed once before Newton's method or another minimization technique is used. If $\H$ is constructed from a truncated orthonormal basis as in Subsection \ref{subsec:Direct-Solutions}, this means that any computation with the truncated basis of size $N_b$ must only be done once, prior to minimizing $\norm{u_{\lambda}^{\b{\tilde{N}}}}_\H^2$. The computation time during any iterations of the optimization method does not scale with $N_b$, which is helpful since $N_b$ is typically much larger than the number of interpolation conditions. We use Newton's method with the $\vc\Phi$ matrix for the numerical tests in this section and are able to obtain accurate results. Where $\tilde{N}_{\text{total}}$ is the total number of interpolation conditions in (\ref{eq:eigdisc}), this approach involves $\O\b{\tilde{N}_{\text{total}}N_b}$ operations to form $\vc\Phi_0,\vc\Phi_1$, and $\vc\Phi_2$, then $\O\b{\tilde{N}_{\text{total}}^3}$ operations for each iteration of Newton's method.

An alternative is to evaluate $\norm{u_{\lambda}^{\b{\tilde{N}}}}_\H^2$ by solving (\ref{eq:eigdisc}) directly in the orthonormal basis using complete orthogonal decomposition (as in MATLAB's $\code{lsqminnorm}$) or singular value decomposition. The matrix form of this problem was stated in Eq. (\ref{eq:linalgopt}) in Subsection \ref{subsec:Direct-Solutions}. Minimization of $\norm{u_{\lambda}^{\b{\tilde{N}}}}_\H^2$ as a function of $\lambda$ can then be done using a derivative-free optimization method, such as MATLAB's $\code{fminbnd}$. This approach requires $\O\b{\tilde{N}_{\text{total}}^2N_b}$ operations to solve (\ref{eq:eigdisc}), and (\ref{eq:eigdisc}) must be solved at each iteration with different values of $\lambda$ when minimizing $\norm{u_{\lambda}^{\b{\tilde{N}}}}_\H^2$ as a function of $\lambda$. This is expensive but considerably more numerically stable than using $\vc\Phi$, as discussed at the end of Subsection \ref{subsec:Direct-Solutions}. 

Newton's method, using the expressions for the first and second derivatives of the squared $\H$-norm, is implemented in MATLAB for the tests in this section. %
Tests are run using a single thread of an Intel Core i5-8635U CPU and 16 GB of RAM. Code to run the tests in this section is available at \href{https://github.com/venndaniel/meshfree_eigenvalues}{https://github.com/venndaniel/meshfree\_eigenvalues}.

\subsection{Laplace--Beltrami on a Sphere\label{subsec:Laplace--Beltrami}}

As mentioned, meshing can be challenging for certain problems. This is often the case for surface
PDEs and provides motivation for the development of meshfree methods. Various techniques for geometry processing rely on the computation
of Laplace--Beltrami eigenvalues on surfaces \cite{reute06,nasik18}.
These techniques typically require a mesh. Reliable meshfree methods
to compute eigenvalues are rare in the literature; it is well known
that various “interpolate and differentiate” approaches to using
RBFs for PDEs do not have analytical guarantees for the eigenvalues
produced by the discretized operators. In particular, eigenvalues
can appear and remain on the wrong side of the half-plane, an observation
primarily made by those studying time-stepping methods using RBFs
(see the discussion at the end of Subsection 3.2 of \cite{alvar21}).

To alleviate this, approaches have been developed to ensure that eigenvalues
appear on the correct side of the half-plane for time-stepping, ranging
from a weak-form approach with Monte Carlo integration \cite{yan23},
to oversampling \cite{chen24}. Oversampling approaches have already
been proven to converge for solving elliptic problems with non-symmetric
RBF approaches \cite{cheun18}, whereas non-symmetric strong-form
RBF approaches with square systems (such as Kansa's original method
\cite{kansa90}) are known to potentially produce singular matrices
and not converge; the exact conditions under which such methods converge
is an open problem. 
Weak-form
approaches for eigenvalue and eigenfunction approximation with Monte Carlo
estimates of integrals (such as \cite{harli23}) converge slowly ($\O\b{\tilde{N}^{-\frac{1}{2}}}$
for eigenvalues) and cannot easily be used on surfaces with boundary.

For surface PDEs, we must consider a slightly different discretization
of problem (\ref{eq:eigpde}) to ensure that our surface differential
operators are computed correctly. For a concrete example, consider
a Laplace--Beltrami eigenvalue problem on a closed surface $S\subset\Omega$:
\begin{align}
-\Delta_{S}u-\lambda u & =0\text{ on }S\text{, }u\ne0.\label{eq:eiglapbeltrami}
\end{align}
We are not limited to closed surfaces; a numerical example for Steklov eigenvalues on a surface with boundary is shown in Subsection
\ref{subsec:Surface-Steklov}. However, we will start with a couple of examples on closed surfaces. Now, we recall a result from Xu and
Zhao (Lemma 1 of \cite{xu03}).
\\
\begin{lemma}\label{lem:lapbel}
Let $f\in C^{2}\b S$ and $\tilde{f}\in C^{2}\b{\Omega}$, where $S\subset\Omega$
is a thrice differentiable manifold of codimension one. If
\[
\tilde{f}\eval_{S}=f,
\]
then
\begin{align}
\Delta_{S}f=\Delta\tilde{f}-\kappa\hat{\vc n}_{S}\cdot\nabla\tilde{f}-\hat{\vc n}_{S}\cdot\b{D^{2}\tilde{f}}\hat{\vc n}_{S}\text{ on }S,\label{eq:lapbeldecomp}
\end{align}
where $\hat{\vc n}_{S}$ is the normal vector to $S$ and $\kappa=\nabla_{S}\cdot\hat{\vc n}_{S}$
is the sum of principal curvatures of $S$ (frequently called the
mean curvature in the computer graphics and numerical PDE communities).
\end{lemma}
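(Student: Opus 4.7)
The plan is to work with the projection onto the tangent space, $P := I - \hat{\vc n}_S \hat{\vc n}_S^T$, and use the intrinsic characterization of the Laplace--Beltrami operator as $\Delta_S f = \mathrm{div}_S\b{\nabla_S f}$, where the surface gradient and divergence are $\nabla_S f = P\nabla\tilde{f}$ on $S$ and $\mathrm{div}_S\vc v = \mathrm{tr}\b{P\nabla\vc v}$. The idea is to evaluate everything using the ambient operators on the extension $\tilde{f}$, exploiting that $\hat{\vc n}_S$ can itself be extended to a neighbourhood of $S$ as a unit vector field (for example, as the gradient of a signed distance function), so that $\nabla\hat{\vc n}_S\cdot\hat{\vc n}_S = \tfrac{1}{2}\nabla\abs{\hat{\vc n}_S}^2 = \vc 0$ on $S$ and $\mathrm{tr}\b{\nabla\hat{\vc n}_S} = \kappa$.

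First, I would write $\nabla_S f = \nabla\tilde{f} - \b{\hat{\vc n}_S\cdot\nabla\tilde{f}}\hat{\vc n}_S$ on $S$ and apply the ambient Jacobian:
\begin{equation*}
\nabla\b{\nabla_S f} = D^2\tilde{f} - \b{\hat{\vc n}_S\cdot\nabla\tilde{f}}\nabla\hat{\vc n}_S - \hat{\vc n}_S\otimes\nabla\b{\hat{\vc n}_S\cdot\nabla\tilde{f}}.
\end{equation*}
Taking the trace against $P$ gives three terms. The last vanishes because $P\hat{\vc n}_S = \vc 0$. The first equals $\mathrm{tr}\b{D^2\tilde{f}} - \hat{\vc n}_S\cdot\b{D^2\tilde{f}}\hat{\vc n}_S = \Delta\tilde{f} - \hat{\vc n}_S\cdot\b{D^2\tilde{f}}\hat{\vc n}_S$. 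For the middle term, the normal-extension property of $\hat{\vc n}_S$ gives $\mathrm{tr}\b{P\nabla\hat{\vc n}_S} = \mathrm{tr}\b{\nabla\hat{\vc n}_S} - \hat{\vc n}_S^T\b{\nabla\hat{\vc n}_S}\hat{\vc n}_S = \mathrm{div}\,\hat{\vc n}_S = \kappa$ on $S$. Assembling the three pieces yields (\ref{eq:lapbeldecomp}).

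The main care point is justifying $\Delta_S f = \mathrm{tr}\b{P\nabla\b{P\nabla\tilde{f}}}$ at points of $S$ in a way that does not depend on the choice of extension. One has to check that (i) $P\nabla\tilde{f}$ agrees on $S$ with the intrinsic surface gradient of $f$, which holds because $P$ kills the normal component and tangential derivatives of $\tilde{f}$ only see $f$; and (ii) that $\mathrm{tr}\b{P\nabla\vc v}$ computed on $S$ for a vector field $\vc v$ that is tangent to $S$ depends only on the tangential derivatives of $\vc v$ along $S$, which follows from the same projection argument applied to columns of $\nabla\vc v$. Once these two invariance facts are in place, the expected rule $\Delta_S f = \mathrm{tr}\b{P\nabla\b{P\nabla\tilde{f}}}$ is justified and the computation above delivers the decomposition.

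The most delicate step is the middle one: identifying $\mathrm{tr}\b{P\nabla\hat{\vc n}_S}$ with $\kappa$. It relies on choosing an extension of the normal for which $\nabla\hat{\vc n}_S \cdot \hat{\vc n}_S = \vc 0$ on $S$, and then invoking $\mathrm{div}_S\hat{\vc n}_S = \kappa$, which is the definition of the sum of principal curvatures used in the statement. Everything else is trace algebra and the product rule, so once the normal extension is fixed and the two invariance facts are verified, the proof is essentially a one-line computation.
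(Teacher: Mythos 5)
The paper does not prove this lemma; it cites it directly from Xu and Zhao (Lemma~1 of \cite{xu03}), so there is no in-text argument to compare against. Your derivation is nonetheless correct and self-contained. You use the standard ambient-extension formalism --- $\nabla_S f = P\nabla\tilde f$ with $P = I - \hat{\vc n}_S\hat{\vc n}_S^T$, and $\operatorname{div}_S \vc v = \operatorname{tr}(P\nabla\vc v)$ --- together with a unit extension of the normal (e.g.\ the gradient of the signed distance), so that $(\nabla\hat{\vc n}_S)^T\hat{\vc n}_S = \vc 0$ on $S$. Applying the ambient Jacobian to $\nabla\tilde f - (\hat{\vc n}_S\cdot\nabla\tilde f)\hat{\vc n}_S$ and tracing against $P$, the outer-product term drops because $P\hat{\vc n}_S = \vc 0$, the Hessian term yields $\Delta\tilde f - \hat{\vc n}_S\cdot(D^2\tilde f)\hat{\vc n}_S$, and the remaining term carries $\operatorname{tr}(P\nabla\hat{\vc n}_S)$, which is $\operatorname{div}_S\hat{\vc n}_S = \kappa$ directly by the lemma's own definition (so the detour through $\operatorname{div}\hat{\vc n}_S$ is unnecessary, although it is harmless because the two divergences agree for a unit-norm extension). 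The two invariance facts you flag --- extension-independence of $P\nabla\tilde f$ restricted to $S$, and of $\operatorname{tr}(P\nabla\vc v)$ on $S$ when $\vc v$ is tangential --- are indeed what make the rule $\Delta_S f = \operatorname{tr}\bigl(P\nabla(P\nabla\tilde f)\bigr)$ legitimate, and both follow from the observation that the difference of two extensions vanishing on $S$ has a Jacobian of the form $\vc c\otimes\hat{\vc n}_S$ there, which is annihilated by the $P$-trace. So the proof holds; it is essentially the argument Xu and Zhao give, recast in clean projection-and-trace language.
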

~

The only term from Eq. (\ref{eq:lapbeldecomp}) that is not readily available when computing $\Delta_S u$ using an extended solution
$\tilde{u}$ on $\Omega$ to problem (\ref{eq:eiglapbeltrami}) is
$\kappa$. We therefore use the discretization:
\begin{align}
\text{minimize, over }u\in\H:\, & \norm u_{\H},\label{eq:eiglapbeltdisc}\\
\text{subject to }\, & \b{-\Delta u+\hat{\vc n}_{S}\cdot\b{D^{2}u}\hat{\vc n}_{S}-\lambda u}\b{\x_{j}}=0\text{, for \ensuremath{j\in\cb{1,2,\ldots,\tilde{N}}}},\nonumber \\
 & \hat{\vc n}_{S}\cdot\nabla u\b{\vc x_{j}}=0\text{, for }j\in\cb{1,2,\ldots,\tilde{N}},\nonumber \\
 & u\b{\vc a_{j}}=b_{j}\text{, for }j\in\cb{1,2,\ldots,\tilde{N}_{a}},\nonumber 
\end{align}
where $\cb{\x_{j}}_{j=1}^{\tilde{N}}=:S_{\tilde{N}}\subset S$ is
a point cloud, $\cb{\vc a_{j}}_{j=1}^{\tilde{N}_{a}}\subset S$ are
points where we ensure that $u$ is non-zero, and $\cb{b_{j}}_{j=1}^{\tilde{N}_{a}}$
are the non-zero values we must select.

For a simple test, we take $S$ to be the unit sphere. The true eigenvalues
of $-\Delta_{S}$ are $n\b{n+1}$ for non-negative integers $n$.
Our choice of $\H$, where $\cb{e^{i\vc{\omega}_{n}\cdot\vc x}}_{n=1}^{\infty}$
is a Fourier basis for $L^{2}$ functions on a box $\Omega=\sb{-\frac{\tilde\ell}{2},\frac{\tilde\ell}{2}}^{3}$
for some $\tilde\ell>0$, is
\begin{align}
\H & =\cb{u=\sum_{n=1}^{\infty}d_{n}^{-\frac{1}{2}}a_{n}e^{i\vc{\omega}_{n}\cdot\vc x}:a\in\ell^{2}},\label{eq:Hspace}
\end{align}
where
\begin{align*}
\b{\sum_{n=1}^{\infty}d_{n}^{-\frac{1}{2}}a_{n}e^{i\vc{\omega}_{n}\cdot\vc x},\sum_{n=1}^{\infty}d_{n}^{-\frac{1}{2}}b_{n}e^{i\vc{\omega}_{n}\cdot\vc x}}_{\H} & :=\b{a,b}_{\ell^{2}}.
\end{align*}
This is a space of Fourier extensions to the box $\Omega$ with smoothness
determined by $d$. Further discussion of these Hilbert spaces, their application in solving PDEs on surfaces, and their suitability for various domains is the subject of \cite{venn}, where the $\lambda=2$ eigenvalue is also computed numerically. Our choice of $d$, which will allow for super-algebraic
convergence, is
\begin{equation}
d_{n}=\exp\b{2q\b{\sqrt{\frac{2\pi}{T}}+\sqrt{\norm{\vc{\omega}_{n}}_{2}}}}.\label{eq:dnchoice}
\end{equation}
$T>0$ can be seen as an ``oscillation width'' of the functions
$\psi_{j}$; frequencies much greater than $\sqrt{\frac{2\pi}{T}}$
are heavily suppressed. With this choice of $d_{n}$, the norm in
$\H$ dominates all finite-order Sobolev norms $H^{p}\b{\Omega}$,
and all functions in $\H$ are $C^{\infty}$. We use $q=\tilde\ell=T=4$, $\tilde{N}_{a}=1$, $b_{1}=1$,
and $N_{b}=\b{2\cdot15+1}^{3}$ Fourier basis functions.

Our first test starts Newton's method, as described in Subsection \ref{subsec:genimpl}, at $\lambda=\b{\frac{n}{2}}^{2}$
for $n=\cb{0,1,2,\ldots30}$ to search for eigenvalues. $\tilde{N}$
scattered points are generated so that the average spacing between
neighbouring points is $h_{\text{avg}}=\O\b{\tilde{N}^{-\frac{1}{2}}}$.
A description of the point-generation algorithm is given in \ref{alg:boundary} (Version B), where we use $\tilde N_{\text{test},\partial}=40$. The algorithm ensures that the point spacing is closer to uniform than a
purely random point cloud, which improves conditioning and accuracy. 
We record the relative error between the relative minimum of $\norm{u_{\lambda}^{\b{\tilde{N}}}}_{\H}^{2}$
closest to $56$ as found by Newton's method using an absolute tolerance
of $10^{-8}$ and the true eigenvalue $\lambda=56$ (the 7th distinct, non-zero eigenvalue).
Note again that we expect the distance between the eigenvalue and a local
minimum of $\norm{u_{\lambda}^{\b{\tilde{N}}}}_{\H}^{2}$ to go to
zero as $\tilde{N}\to\infty$ at a high-order rate (see Proposition
\ref{prop:If-all-assumptions} and the following discussion in Subsection
\ref{subsec:Discussion-of-Propositions}). A plot of the relative
error against $\tilde{N}$ for the $\lambda=56$ eigenvalue is displayed
in Fig. \ref{fig:Relative-error-for}.

\begin{figure}[H]
\begin{centering}
\includegraphics[width=119mm]{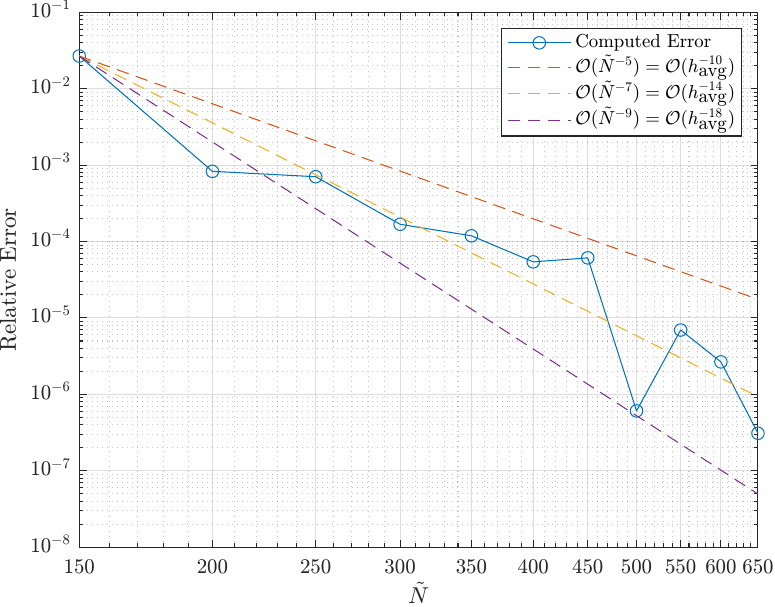}
\par\end{centering}
\caption{\label{fig:Relative-error-for}Relative error for the nearest computed
Laplace--Beltrami eigenvalue to $\lambda=56$ using $\tilde{N}$ scattered points on the unit sphere. $h_{\text{avg}}$ is the average
point spacing, which scales as $\protect\O\protect\b{\tilde{N}^{-\frac{1}{2}}}$
on a 2D surface}
\end{figure}

Note that the point generation is partially random, so we expect some
irregularities in the convergence results, as seen in Fig. \ref{fig:Relative-error-for}.
The overall convergence appears to be high-order.
For the $\tilde{N}=650$ test, we list the first 14 computed non-zero
eigenvalues (not including multiplicities) along with their
true values in Table \ref{tab:First-15-eigenvalues}. This test takes 34 seconds to run after the matrices $\vc\Phi_0, \vc \Phi_1$, and $\vc\Phi_2$ are formed; Newton's method for the individual eigenvalue $\lambda=56$ takes 2.3 seconds. Matrix formation takes 8 seconds for $\tilde{N} = 650$. The computed
eigenvalue corresponding to the true eigenvalue $\lambda=0$ was $10^{-8}$;
Newton's method was run to a tolerance of $10^{-8}$. Note that $\lambda=210$ is the 197th to 225th eigenvalue when multiplicities
are included.

\begin{table}[h]
\caption{First 14 non-zero eigenvalues on the unit sphere computed
via minimization of $\protect\norm{u_{\lambda}^{\protect\b{\tilde{N}}}}_{\protect\H}^{2}$
with $\tilde{N}=650$ compared with the true eigenvalues and the corresponding
relative error}\label{tab:First-15-eigenvalues}
\begin{tabular}{@{}lllllll@{}}
\toprule 
Computed $\lambda$ & True $\lambda$ & Relative Error && Computed $\lambda$ & True $\lambda$ & Relative Error\\
\cmidrule{1-3}\cmidrule{5-7}

1.99999999 & 2 & 5.7728E-09 && 71.99981050 & 72 & 2.6319E-06\\

5.99999996 & 6 & 7.3495E-09&&89.99979568 & 90 & 2.2702E-06\\

11.99999973 & 12 & 2.2910E-08&&109.99830298 & 110 & 1.5427E-05\\

19.99999965 & 20 & 1.7479E-08&&131.99507486 & 132 & 3.7312E-05\\

29.99999981 & 30 & 6.3053E-09&&155.97416338 & 156 & 1.6562E-04\\

41.99998409 & 42 & 3.7875E-07&&181.88803521 & 182 & 6.1519E-04\\

55.99998263 & 56 & 3.1010E-07&&209.75390081 & 210 & 1.1719E-03\\
\bottomrule
\end{tabular}
\end{table}
 
These tests show that knowing $\norm{u_{\lambda}^{\b{\tilde{N}}}}_{\H}^{2}$
is bounded if and only if $\lambda$ is an eigenvalue is sufficient
for computing Laplace--Beltrami eigenvalues on an unstructured
point cloud. Furthermore, estimates for the smaller eigenvalues are
highly accurate, even for a fairly large point spacing ($\tilde{N}=650$
roughly corresponds to a distance on the order of $10^{-1}$ between
points, on average).

\subsection{Laplace--Beltrami on a Genus 2 Surface\label{subsec:Genus2}}

Implicitly-defined surfaces generally do not have a parametrization available and may be difficult or expensive to mesh. Conversely, forming a point cloud on an implicit surface, as needed for our method, is relatively straightforward.
For an example of a more complicated surface, we take a genus 2 surface defined as the zero set of the function:
\[
\varphi\b{x,y,z} := \frac{1}{4\b{\b{x-1}^{2}+y^{2}}}+\frac{1}{4\b{\b{x+1}^{2}+y^{2}}}+\frac{1}{10}x^{2}+\frac{1}{4}y^{2}+z^{2}-1.
\] 
The point cloud is again generated using the algorithm in \ref{alg:boundary} (Version B) with $\tilde{N}_{\text{test},\partial}=40$. To better understand the method, it is helpful to examine a plot of $\norm{u_{\lambda}^{\b{\tilde{N}}}}_{\H}$ as a function of $\lambda$; this is shown in Fig. \ref{fig:hnorm}. We use a Hilbert space given by Eq. (\ref{eq:Hspace}) and a choice of $d_n$ given by Eq. (\ref{eq:dnchoice}) with parameters $q=5$ and $T=12$. $N_b=\b{2\cdot15+1}^3$ Fourier basis functions on the box $\Omega=\sb{-5,5}\times\sb{-3,3}\times\sb{-1.5,1.5}$ are used for the tests in this subsection.

\begin{figure}[H]
\begin{centering}
\includegraphics[width=119mm]{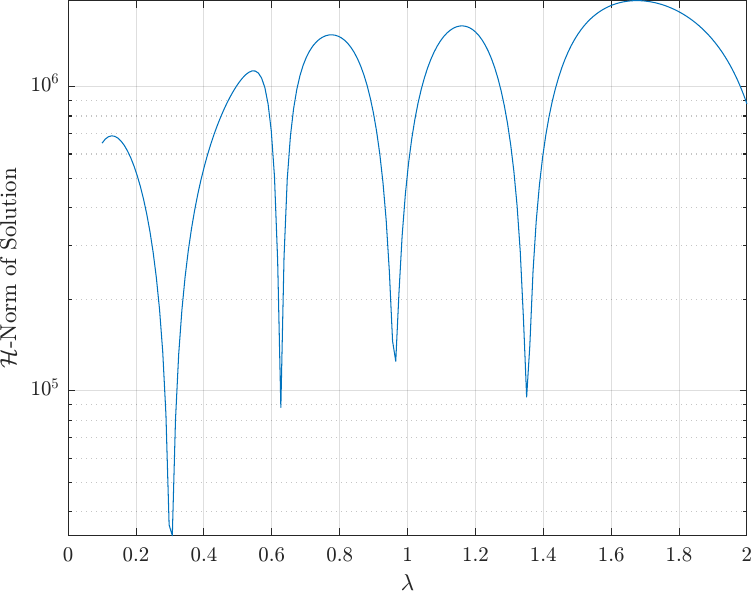}
\par\end{centering}
\caption{\label{fig:hnorm}$\norm{u_{\lambda}^{\b{\tilde{N}}}}_{\H}$ as a function of $\lambda$ for $\tilde{N}=1200$ for a genus 2 surface}
\end{figure}

Fig. \ref{fig:hnorm} shows four clear minima of $\norm{u_{\lambda}^{\b{\tilde{N}}}}_{\H}$ for $\lambda\in\b{0.1,2}$; these should correspond to eigenvalues of the Laplace--Beltrami operator on this surface. We test the convergence of the minima near $\lambda=0.3$ in Table \ref{tab:firstgenus2}, computed using Newton's method initialized at 0.3 and run for 20 iterations or to an absolute tolerance of $10^{-12}$. We conclude that the first non-zero eigenvalue is around $0.3025$ or $0.3026$. Table \ref{tab:secondgenus2} gives estimates of the second non-zero eigenvalue with Newton's method initialized at the rough estimate of $\lambda=0.63$; the method produces a result around 0.6263 or 0.6264. In both tests, we observe convergence of the first few digits with point spacing on the order of $10^{-1}$. The eigenfunction corresponding to $\lambda\approx0.626$ is shown in Fig. \ref{fig:eigfuncg2}.

\begin{table}[h]
\caption{First non-zero eigenvalue estimate for a genus two surface, with computation times}\label{tab:firstgenus2}
\begin{tabular}{@{}lllll@{}}
\toprule 
$\tilde{N}$ & Computed $\lambda$ & Relative Change & Time ($\vc \Phi_j$ Matrices) & Time (Newton's Method)\\
\midrule

400 & 0.3098406 & N/A & 3.31 s & 0.28 s\tabularnewline

800 & 0.3022492 & 2.5116E-02 & 13.98 s & 1.60 s\tabularnewline

1200 & 0.3025654 & 1.0452E-03 & 33.18 s & 10.29 s\tabularnewline

1600 & 0.3025519 & 4.4573E-05 & 53.81 s & 16.86 s\tabularnewline
\bottomrule
\end{tabular}

\end{table}

\begin{table}[h]
\caption{Second non-zero eigenvalue estimate for a genus two surface, with computation times, noting that the $\vc \Phi _j$ matrices do not need to be computed again after the test of Table \ref{tab:firstgenus2}}\label{tab:secondgenus2}
\begin{tabular}{@{}llll@{}}
\toprule 
$\tilde{N}$ & Computed $\lambda$ & Relative Change & Time (Newton's Method)\\
\midrule

800 & 0.6274113 & N/A & 3.82 s\tabularnewline

1200 & 0.6264340 & 1.5601E-03 & 8.43 s\tabularnewline

1600 & 0.6263515 & 1.3165E-04 & 18.17 s\tabularnewline

\bottomrule
\end{tabular}

\end{table}

\begin{figure}[ht]
\begin{centering}
\includegraphics[width=119mm]{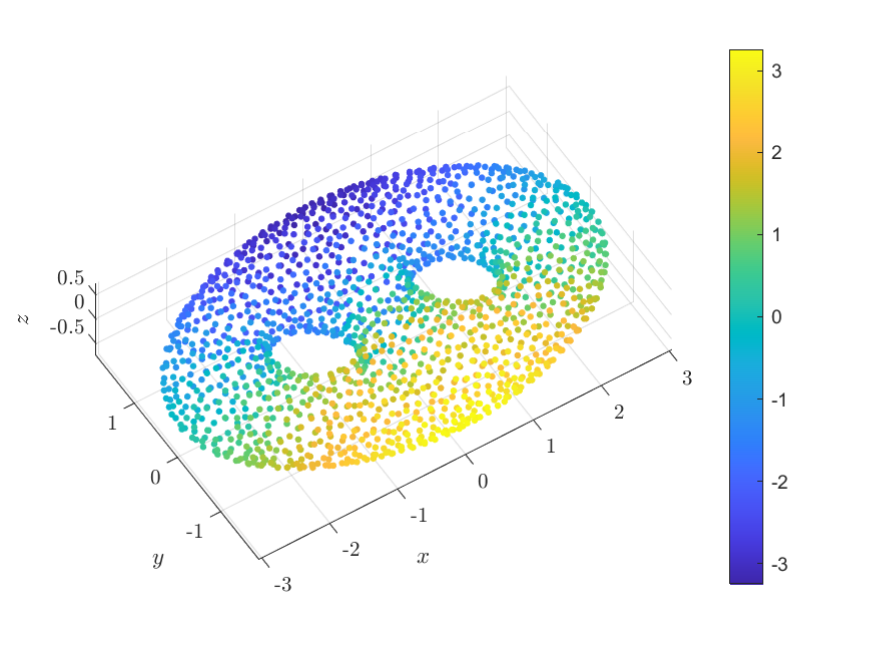}
\par\end{centering}
\caption{\label{fig:eigfuncg2}The Laplace--Beltrami eigenfunction corresponding to $\lambda\approx0.626$ for a genus 2 surface, computed with $\tilde{N}=1600$ points}
\end{figure}

An alternative approach, for when the mean curvature of the surface is readily available, is to include the first normal derivative term in Laplace--Beltrami computation directly. That is, instead of using Eq. (\ref{eq:eiglapbeltdisc}), we instead solve the following optimization problem:

\begin{align}
\text{minimize, over }u\in\H:\, & \norm u_{\H},\label{eq:includekappa}\\
\text{subject to }\, & \b{-\Delta u+\hat{\vc n}_{S}\cdot\b{D^{2}u}\hat{\vc n}_{S} + \kappa \hat{\vc n}_{S}\cdot\nabla u\b{\vc x_{j}}-\lambda u}\b{\x_{j}}\nonumber\\
&\quad=0\text{, for \ensuremath{j\in\cb{1,2,\ldots,\tilde{N}}}},\nonumber \\
 & u\b{\vc a_{j}}=b_{j}\text{, for }j\in\cb{1,2,\ldots,\tilde{N}_{a}},\nonumber 
\end{align}
where $\kappa=\nabla\cdot\frac{\nabla\varphi}{\norm{\nabla\varphi}_2}$. This approach uses Lemma \ref{lem:lapbel} more directly and is numerically advantageous since it imposes fewer conditions. However, it is only applicable when $\kappa$ can be computed. In this case, $\kappa$ is known since we have a level set for the surface, but it is possible to estimate $\kappa$ accurately using only point cloud data via meshfree interpolation of a (local) level set. Methods for fitting level sets through unorganized points (thus computing normal vectors and mean curvature) are prevalent in the computer graphics community (see \cite{carr01}, for example); we take normal vectors as given throughout this paper and mean curvature as given for the rest of this subsection.
Estimates of the first two non-zero eigenvalues using the approach from (\ref{eq:includekappa}) are given in Tables \ref{tab:firstgenus2k} and \ref{tab:secondgenus2k}. We note considerable improvement. Computation time is also significantly reduced since the matrices involved are smaller. Note again that the $\vc \Phi_j$ matrices only need to be computed once per point cloud.

\begin{table}[h]
\caption{First non-zero eigenvalue estimate for a genus two surface using the modified approach given by Eq. (\ref{eq:includekappa}), with computation times}\label{tab:firstgenus2k}
\begin{tabular}{@{}lllll@{}}
\toprule 
$\tilde{N}$ & Computed $\lambda$ & Relative Change & Time ($\vc \Phi_j$ Matrices) & Time (Newton's Method)\\
\midrule

400 & 0.3019843 & N/A & 1.69 s & 0.06 s\tabularnewline

800 & 0.3025272 & 1.7948E-03 & 4.33 s & 0.24 s\tabularnewline

1200 & 0.3025111 & 5.3416E-05 & 8.91 s & 0.44 s\tabularnewline

1600 & 0.3025207 & 3.1804E-05 & 11.89 s & 3.56 s\tabularnewline

2000 & 0.3025205 & 7.2296E-07 & 29.68 s & 5.76 s\\

\bottomrule
\end{tabular}

\end{table}

\begin{table}[h]
\caption{Second non-zero eigenvalue estimate for a genus two surface using the modified approach given by Eq. (\ref{eq:includekappa}), with computation times, noting that the $\vc \Phi _j$ do not need to be computed again after the test of Table \ref{tab:firstgenus2k}}\label{tab:secondgenus2k}
\begin{tabular}{@{}lllll@{}}
\toprule 
$\tilde{N}$ & Computed $\lambda$ & Relative Change & Time (Newton's Method)\\
\midrule

400	&0.6265373&N/A	& 0.06 s\\
800	&0.6262142	&5.1589E-04 & 0.19 s\\
1200	&0.6263335	&1.9035E-04 & 1.45 s\\
1600	&0.6263370	&5.7036E-06 & 0.73 s\\
2000	&0.6263408	&6.0671E-06 & 5.68 s\\

\bottomrule
\end{tabular}

\end{table}

\subsection{Laplacian on an Irregular 2D Domain}\label{subsec:irreg}

We now look at a 2D problem on an irregularly-shaped domain with a hole. The domain is defined by the zero set of 
\[
\vp\b{x,y} := x^2 + y^2 - 0.2\sin\b{5x}-0.2\sin\b{6y} + \frac{0.1}{x^2 + y^2} -1.
\]
A point cloud for this domain, along with a computed eigenfunction of the Laplacian with Dirichlet boundary conditions, is shown in Figure \ref{fig:lapeigfunc}. We compare the meshfree method against P2 finite elements on a mesh generated by a Delaunay triangulation of the point cloud, post-processed to remove triangles outside the domain. We use the space $\H$ from Subsection \ref{subsec:Laplace--Beltrami}, but with $\vc \omega _n\in\R^2$. The numerical discretization is

\begin{align}\label{eq:irregdomainopt}
\text{minimize, over }u\in\H: & \norm u_{\H},\\
\text{subject to } & -\b{\Delta u}\b{\x_{j}} -\lambda u\b{\x _j}=0\text{, for each \ensuremath{j\in\cb{1,2,\ldots,\tilde{N}}}},\nonumber \\
 & u\b{\vc y_{j}}=0\text{, for each }j\in\cb{1,2,\ldots,\tilde{N}_\partial},\nonumber\\
 & u\b{\vc a_{j}}=b_j\text{, for each }j\in\cb{1,2,\ldots,\tilde{N}_{a}}.\nonumber
\end{align}

Table \ref{tab:2dlap} shows the first estimated eigenvalue using the meshfree method with $q=2.5,T=0.5,N_b=\b{2\cdot75 + 1}^2,$ and $\Omega = \sb{-1.5,1.5}^2$ compared to the eigenvalues from a P2 finite element method (FEM P2), computed using FreeFEM++ \cite{freefem}. $\tilde{N}\approx \tilde{N}_\partial^2/30$ interior points are generated using the algorithm in \ref{alg:interior} with $w=0$, while the boundary points are generated with \ref{alg:boundary} (Version B). The recorded time for the meshfree method is for the Newton's method iterations after forming the $\vc \Phi_j$ matrices, while the FEM times are for finding the first non-zero eigenvalue after the mesh and matrices are already formed. Newton's method is initialized at $\lambda=28.5$ and is run with an absolute tolerance of $10^{-5}$.
   
\begin{table}[h]

\caption{\label{tab:2dlap}First Laplacian eigenvalue on a 2D domain with a hole
computed via minimization of $\protect\norm{u_{\lambda}^{\protect\b{\tilde{N}}}}_{\protect\H}^{2}$
(meshfree, left) compared to a FEM computation with P2 elements (right). An additional computation with $\tilde N_\partial=640$ is given for the FEM computation only for comparison to the final meshfree values.
}

\begin{tabular}{@{}lllllll@{}}
\toprule 
$\tilde{N}_\partial$ & Meshfree $\lambda$ & Relative Change & Time& FEM P2 $\lambda$ & Relative Change & Time\\
\midrule

120 & 28.46418 & N/A & 0.2 s&28.38149 & N/A & 0.1 s\tabularnewline

160 & 28.48687 & 7.9676E-04 & 0.5 s&28.45701 & 2.6539E-03 & 0.1 s\tabularnewline

200 & 28.48522 & 5.7903E-05 & 1.3 s&28.45443 & 9.0844E-05 & 0.2 s\tabularnewline

240 & 28.48403 & 4.1963E-05 & 2.2 s&28.45910 & 1.6409E-04 & 0.4 s\tabularnewline

280 & 28.48408 & 1.6729E-06 & 3.9 s&28.46771 & 3.0248E-04 & 0.5 s\tabularnewline

320 & 28.48409 & 5.2572E-07 & 8.8 s&28.47623 & 2.9921E-04 & 0.7 s\tabularnewline
\tabularnewline
640 &  &  & &28.48212 & 2.0681E-04 & 2.9 s\\

\bottomrule
\end{tabular}

\end{table}

We see that the meshfree method consistently obtains the first 5 digits of the eigenvalue starting at $\tilde{N}_\partial=240$. At $\tilde{N}_\partial=640$, a standard finite element approach does not yet seem to obtain the fifth digit. The $\tilde{N}_\partial=640$ FEM test demonstrates that the result with a standard method does indeed appear to be converging to the value found with the meshfree method. Examining computation times, we see that while the meshfree method takes longer for the same number of points, as expected due to the dense matrices involved, higher accuracy is generally achieved for the same computation time. This becomes more pronounced for higher computation times; the $\tilde{N}_\partial=240$ test for the meshfree method takes 2.2 s but appears to produce a considerably more accurate result than the $\tilde{N}_\partial=640$ FEM test, which takes 2.9 s. We reiterate that these times are for computing a single eigenvalue after the necessary setup has already been done. This setup involves forming the $\vc\Phi_j$ matrices for the meshfree method, and meshing and matrix formation for the FEM. Meshing an arbitrary level set, such as the one in this test, without a parametrization is not a feature included in many standard FEM packages. This could be time-consuming for the mesh resolutions needed to achieve a similar accuracy as the final tests of the meshfree method. %
The scaled eigenfunction for $\tilde{N}_\partial=320$ computed with the meshfree method is shown in Figure \ref{fig:lapeigfunc}. Overall, we conclude that the meshfree method can accurately compute eigenvalues on irregularly-shaped domains with holes.

\begin{figure}[H]
\begin{centering}
\includegraphics[width=119mm]{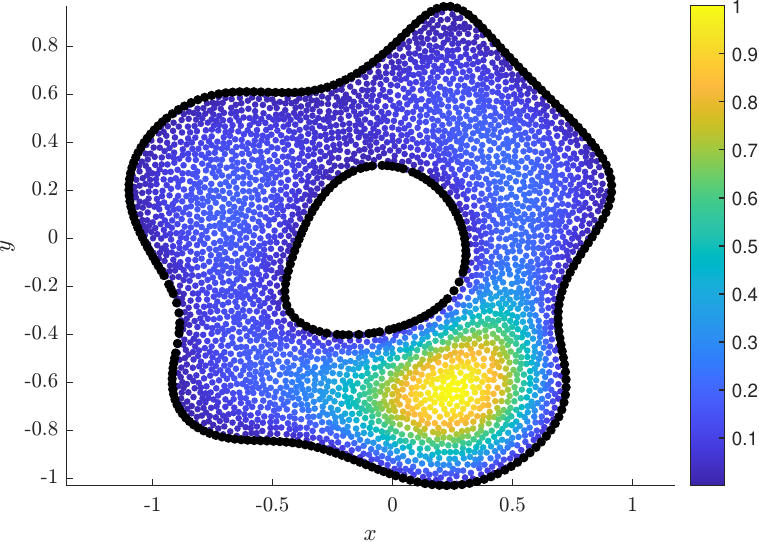}
\par\end{centering}
\caption{\label{fig:lapeigfunc}Eigenfunction for $\lambda\approx 28.48409$ computed via the meshfree method with $\tilde{N}_\partial=320$ boundary points and $\tilde{N}=3413$ interior points}
\end{figure}

\subsection{Steklov Eigenvalues\label{subsec:Steklov-Eigenvalues}}

For a domain with boundary $S$, the Steklov problem (for the Laplacian)
is given by
\begin{align*}
\Delta u & =0\text{ on }S,\\
\b{\hat{\vc n}\cdot\nabla u-\lambda u}\eval_{\partial S} & =0.
\end{align*}
That is, the eigenvalue $\lambda$ is in the boundary condition, not
in the interior of the domain. A variety of methods have been proposed
for this problem; primarily, these are finite element \cite{han15,li13,weng15,xiong23,you19}
or boundary integral methods \cite{akhme17,chen20b}. Adapting this
problem for surfaces, which we do later in Subsection \ref{subsec:Surface-Steklov},
is typically difficult. This is particularly true for boundary integral methods,
since Green's functions for the surface may not be known or feasible to compute, except for certain simple surfaces such as the sphere \cite{gemmr08}.
Boundary integral methods also rely on the availability of a suitable
quadrature scheme, which also might not be available for surfaces.
Also, as usual, meshing is more difficult on surfaces than for flat
domains. A meshfree method for this problem could therefore be quite
useful, especially on surfaces.

A major advantage of our approach is that it is completely universal
for linear PDEs; there are no additional complications from a Steklov
problem compared to the usual Laplace eigenvalue problem. We consider:

\begin{align}
\text{minimize, over }u\in\H: & \norm u_{\H},\label{eq:eigsteklov}\\
\text{subject to } & \b{\Delta u}\b{\x_{j}}=0\text{, for each \ensuremath{j\in\cb{1,2,\ldots,\tilde{N}}}},\nonumber \\
 & \hat{\vc n}_{S}\cdot\nabla u\b{\vc y_{j}}-u\b{\vc y_{j}}=0\text{, for each }j\in\cb{1,2,\ldots,\tilde{N}_{\partial}},\nonumber \\
 & u\b{\vc a_{j}}=1\text{, for each }j\in\cb{1,2,\ldots,\tilde{N}_{a}}.\nonumber 
\end{align}

As a test example, we examine this problem for the unit disk. The
true Steklov eigenvalues for this problem are known to be the non-negative
integers (see, for example, Example 1.3.1. of \cite{girou17}).
A note for the Steklov problem is that solutions are
known to decay rapidly away from the boundary (see Thm. 1.1 of \cite{hislo01}); for this reason, it 
makes sense to place the $\vc a_{j}$ points on the boundary. This
approach is more successful numerically than placing $\vc a_{j}$
points near the centre and is supported by the analysis of Subsection
\ref{subsec:Analysis-for-Steklov}, which requires $\vc a_{j}$ to
be on the boundary. We set $u\b{\vc a_j}=1$ on one point on the boundary ($\tilde{N}_{a}=1$), and use
$N_{b}=\b{2\cdot75+1}^{2}$, $\Omega=\sb{-2,2}^{2}$, $q=4$, and $T=1$,
with $\H$ as in Subsection \ref{subsec:Laplace--Beltrami},
so that $d_{n}$ is given by Eq. (\ref{eq:dnchoice}) (where
$\vc{\omega}_{n}$ is now in $\R^{2}$ rather than $\R^{3}$). We note here that the basis functions used to construct $\H$ are not radial and are not related to the eigenfunctions of the unit disk; $\H$ could instead be constructed using radial functions, which would likely yield more rapid convergence. However, the point of this test is partially to show that we can use a standard Fourier basis on the box to accurately find eigenvalues on a non-box domain. 

We
take $\tilde{N}\approx\b{\frac{\tilde{N}_{\partial}}{4}}^{2}$, with
more scattered points near the boundary than in the middle of the
domain; this is done through a similar process as the point cloud
in the previous subsection, but by weighting distances from the existing
point cloud by $4\b{1-r^{2}}+1$, where $r$ is the distance of a
potential new point from the origin. This gives a preference for points
near the boundary, which we have observed numerically to improve
convergence. The specific processes used are given in \ref{alg:boundary} (Version A) for the $\tilde{N}_\partial$ points on the boundary and \ref{alg:interior} for the $\tilde{N}$ points in the interior, with $w=4$. Notably, the generated point cloud will not have any symmetry properties. Again, a choice of point cloud that is better tailored to the domain or $\H$ may be possible, but this does not generalize well to irregular domains or surfaces. Therefore, we use the more general approach for point cloud generation here as a demonstration, despite the simple domain.
As an initial test, we search for the $\lambda=10$ Steklov eigenvalue
at various resolutions. The result is shown in Fig. \ref{fig:Relative-error-for-1}. A absolute tolerance of $10^{-8}$ is used as a stopping criterion for Newton's method.

\begin{figure}[H]
\begin{centering}
\includegraphics[width=119mm]{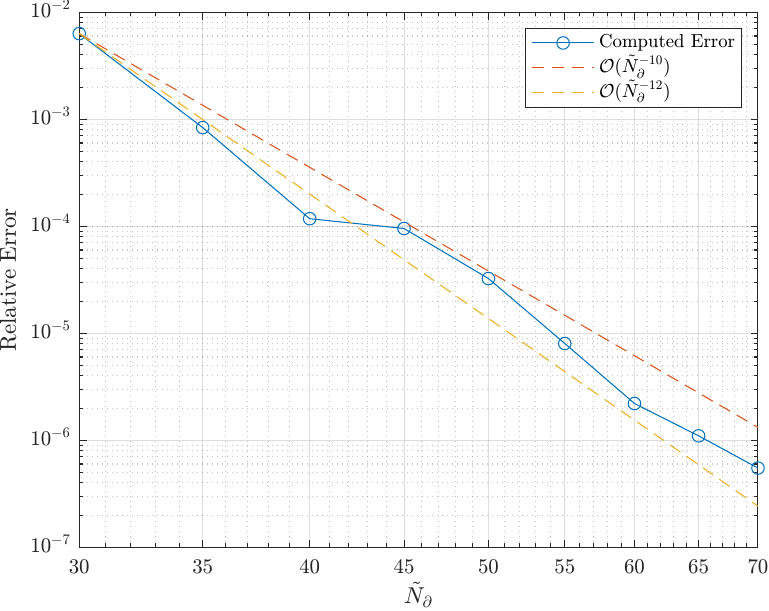}
\par\end{centering}
\caption{\label{fig:Relative-error-for-1}Relative error for the nearest computed
Steklov eigenvalue to $\lambda=10$ using $\tilde{N}_{\partial}$
scattered points on the boundary of the unit disk and $\approx\tilde{N}_{\partial}/4$
points in the interior}
\end{figure}

Also, with $\tilde{N}_{\partial}=65$, we list the first 20 computed
Steklov eigenvalues in Table \ref{tab:First-10-eigenvalues}, not
including multiplicities. We capture the first 20 eigenvalues fairly accurately (39 including
multiplicity); the behaviour is similar to the Laplace--Beltrami
test from Subsection \ref{subsec:Laplace--Beltrami}, demonstrating the generality of
the method.

\begin{table}[h]

\caption{\label{tab:First-10-eigenvalues}First 20 Steklov eigenvalues on the unit disk computed via
minimization of $\protect\norm{u_{\lambda}^{\protect\b{\tilde{N}}}}_{\protect\H}^{2}$
with $\tilde{N}_{\partial}=65$ compared with the true eigenvalues
and the corresponding relative error}
\begin{tabular}{@{}lllllll@{}}
\toprule 
Computed $\lambda$ & True $\lambda$ & Relative Error&&Computed $\lambda$ & True $\lambda$ & Relative Error\tabularnewline
\cmidrule{1-3}\cmidrule{5-7}

0.00000003	&	0	&	N/A	&&10.00001105	&	10	&	1.1048E-06	\tabularnewline
1.00000006	&	1	&	6.3018E-08	&&11.00002931	&	11	&	2.6646E-06	\tabularnewline
2.00000016	&	2	&	8.0038E-08	&&12.00004538	&	12	&	3.7814E-06	\tabularnewline
3.00000017	&	3	&	5.6218E-08	&&13.00010038	&	13	&	7.7214E-06	\tabularnewline
4.00000035	&	4	&	8.8567E-08	&&14.00019356	&	14	&	1.3826E-05	\tabularnewline
5.00000063	&	5	&	1.2538E-07	&&15.00027199	&	15	&	1.8133E-05	\tabularnewline
6.00000124	&	6	&	2.0701E-07	&&16.00048015	&	16	&	3.0010E-05	\tabularnewline
7.00000252	&	7	&	3.5951E-07	&&17.00098217	&	17	&	5.7775E-05	\tabularnewline
8.00000365	&	8	&	4.5600E-07	&&18.00116124	&	18	&	6.4513E-05	\tabularnewline
9.00000790	&	9	&	8.7827E-07	&&19.00179071	&	19	&	9.4248E-05 \tabularnewline
\bottomrule
\end{tabular}
\end{table}

\subsubsection{Error Estimation}\label{subsubsec:ee}
In Subsection \ref{subsec:Discussion-of-Propositions}, we noted that the limit of $\norm{\tilde u_\lambda^{\b{\tilde N_2}}}_\H/\norm{\tilde u_\lambda^{\b{\tilde N_1}}}_\H$ either goes to one or infinity for $\tilde N_2>\tilde N_1$, depending on whether $\lambda$ is an eigenvalue (see Eqs. (\ref{eq:lim1}) and (\ref{eq:lim2}) for the specific limits). While the discussion in \ref{subsec:Discussion-of-Propositions} is for standard eigenvalues, the same limits apply for Steklov problems using Subsection \ref{subsec:Analysis-for-Steklov} instead of \ref{subsec:Convergence-Rate-Analysis}. We now attempt to use this fact to find intervals around local minima of $\norm{\tilde u_\lambda^{\b{\tilde N_2}}}_\H$ as a function of $\lambda$ that could contain the true eigenvalue. 

If we choose any cut-off $C>1$, the intervals where $\norm{\tilde u_\lambda^{\b{\tilde N_2}}}_\H/\norm{\tilde u_\lambda^{\b{\tilde N_1}}}_\H\le C$ will eventually narrow around the true eigenvalues. However, choosing a large value of $C$ may result in an interval too wide to provide useful information, and choosing $C$ too small may result in an empty set for smaller values of $\tilde{N}_2$; even at a true eigenvalue, this ratio is always greater than one (but approaches one in the limit). We may vary $C$ with $\tilde{N}_2$ as well, as long as $C\to 1$ no faster than the norm ratio for $\lambda=\lambda_k$. A possible choice is to let $C$ be the square of a local minimum of the norm ratio. Since this norm ratio decreases to 1 only for true eigenvalues, such an interval will shrink around the true eigenvalue but will never be empty. Using the square of the minimum produces an interval where the norm ratios are greater than one by a magnitude similar to the difference between the local minimum and one.

To demonstrate this, we compute the absolute error for the local minimum $\lambda^*$ closest to the true Steklov eigenvalues $\lambda=5$ and $10$, and the distance from the local minimum where the norm ratio becomes greater than the minimum ratio squared. These distances provide a rough error bound and are estimated by computing the ratio for $\lambda=\lambda^*\pm10^{-c}$ for $c=1.0,1.1,1.2,\ldots,7.0$. We use $\tilde N_1\approx0.9\tilde N_2$, where $N_2$ is again $\approx \b{\tilde N_\partial/4}^2$. 
Table \ref{tab:errest} shows the results of the error estimation test for $\lambda=5$ and $10$. Note that for $\lambda=10$ and $\tilde{N}_\partial=30$, all tested values of $\lambda$ resulted in a norm ratio below the cutoff, indicating that the eigenvalue estimate was not yet reliable. The estimated error bounds correctly bound the actual error and seem to approach zero rapidly, as desired.

\begin{table}[h]

\caption{Actual error and estimated error bound for the approximation of the $\lambda=5$ and $\lambda = 10$ Steklov eigenvalues of the disk}\label{tab:errest}
\begin{tabular}{@{}lllll@{}}
\toprule 
&\multicolumn{2}{@{}c@{}}{$\lambda=5$} & \multicolumn{2}{@{}c@{}}{$\lambda=10$} \\
$\tilde{N}_\partial$ & Absolute Error & Error Bound Estimate & Absolute Error & Error Bound Estimate\\
\midrule

30	&4.1881E-03&1.3589E-02&6.3149E-02&N/A\\
40 &1.1697E-04&3.1543E-04&1.1766E-03&1.5829E-02\\
50 &1.4624E-05&1.9753E-04&3.2470E-04&4.9119E-03\\
60 &1.0175E-06&6.2996E-05&2.2063E-05&2.5182E-04\\
70 &2.9521E-07&4.7247E-07&5.6377E-06&7.8433E-06\\

\bottomrule
\end{tabular}

\end{table}

\subsection{Exceptional Steklov--Helmholtz}

An advantage of this approach is that we can handle problems that
may be difficult with standard methods without any modifications.
Consider a Steklov--Helmholtz problem on the unit disk:
\begin{align*}
-\Delta u-\mu^{2}u & =0\text{ on }S,\\
\b{\hat{\vc n}\cdot\nabla u-\lambda u}\eval_{\partial S} & =0.
\end{align*}

A possible complication arises when $-\mu^{2}$
is an eigenvalue of the Laplacian with homogeneous Dirichlet boundary
conditions. In this case, there is a Steklov ``eigenvalue'' $\lambda$
at $-\infty$ associated with the Dirichlet eigenfunction(s); as $\lambda\to-\infty$, the $\b{\hat{\vc n}\cdot\nabla u-\lambda u}\eval_{\partial S}=0$
condition becomes $u\eval_{\partial S}=0$. When $-\mu^2$ is close to a Dirichlet eigenvalue (as is the case numerically due to finite precision), there can instead be a large, negative Steklov eigenvalue. This can introduce numerical instability for some methods, even for low-resolution computations.
Our method uses different matrices for each value of $\lambda$
(that can be formed from blocks of the same, more computationally
intensive matrices) and only tests solvability for a specific $\lambda$,
so it should not be affected by this issue.

To test this, we use $\mu=2.404825557695773$, which could cause the aforementioned
problem since $-\mu^2$ is a Dirichlet eigenvalue in this case. We estimate the first positive Steklov--Helmholtz
eigenvalue in Table \ref{tab:First-10-eigenvalues-1}, using the same
parameters as the previous subsection. We do not encounter any issues; there is nothing unique about the Dirichlet
eigenvalues for our method. Convergence in Table \ref{tab:First-10-eigenvalues-1}
is somewhat irregular, which is likely due to randomness in the point
cloud generation process.

\begin{table}[h]

\caption{\label{tab:First-10-eigenvalues-1}Estimated first positive Steklov--Helmholtz
eigenvalue on the unit disk, with $\mu=2.404825557695773$. The true
value is $\approx0.891592981473392$. The convergence rate is the
negative slope of the log-log plot of relative error plotted against
$\tilde{N}_{\partial}$}
\begin{tabular}{@{}llll@{}}
\toprule 
$\tilde{N}_{\partial}$ & $\lambda$ Estimate & Relative Error & Convergence\tabularnewline
\midrule

18	& 0.914785095121278	& 2.6012E-02
 & N/A\tabularnewline

30	& 0.891663966985991	& 7.9616E-05	& 11.333
\tabularnewline

42	& 0.891598053856351	& 5.6891E-06 & 7.842
\tabularnewline

54	& 0.891593063589785	& 9.2101E-08 & 16.407
\tabularnewline

66	& 0.891593015994537	& 3.8719E-08 & 4.318
\tabularnewline

78 & 0.891592987687612 & 6.9698E-09 & 10.265\tabularnewline
\bottomrule
\end{tabular}

\end{table}

\subsection{Schrödinger--Steklov}

We now consider the same problem as in the previous two subsections
but with a Schrödinger equation rather than $\Delta u=0$ or $-\Delta u-\mu^{2}u=0$.
That is, we consider
\begin{align*}
-\Delta u+qu & =0\text{ on }S,\\
\b{\hat{\vc n}\cdot\nabla u-\lambda u}\eval_{\partial S} & =0,
\end{align*}
where $q$ is a given function on $S$ (physically, it is the potential
energy). In \cite{quino18}, Quiñones computes asymptotic expressions
for the Steklov eigenvalues $\lambda$ in the case that $S$ is the unit
circle and $q$ is radial. These asymptotic expressions are compared
to numerically obtained eigenvalues computed by Quiñones using a finite
element method (FEM) based on code from Bogosel (Section 6 of \cite{bogos16}).
One of the radial functions considered by Quiñones is
\[
q\b r=\frac{\frac{1}{2}r+\frac{1}{5}\cos\b{5r}}{2r^{3}+1};
\]
the 10th non-zero %
eigenvalue with this function
was computed to be $\approx10.00807486$ (Table 2.2 of \cite{quino18}).
We repeat this finite element computation using Bogosel's code, modified
for the Schrödinger--Steklov problem, and compare convergence to our
meshfree approach. Table \ref{tab:stekschro} shows a convergence test with
all parameters identical to Subsection \ref{subsec:Steklov-Eigenvalues},
as well as convergence for the P2 finite element method as a comparison. The convergence rates given in Table \ref{tab:stekschro} are relative to $\tilde{N}_{\partial}$, which is
inversely proportional to point spacing and element size, for the
meshfree and FEM approaches, respectively.

\begin{table}[h]
\caption{\label{tab:stekschro}Schrödinger--Steklov eigenvalue $\lambda\approx10.00807486$
computed via minimization of $\protect\norm{u_{\lambda}^{\protect\b{\tilde{N}}}}_{\protect\H}^{2}$
(meshfree, left) compared to a FEM computation with P2 elements (right). The convergence rate is
 the negative slope of the log-log plot of relative error plotted against
 $\tilde{N}_{\partial}$
}

\begin{tabular}{@{}lllll@{}llll@{}}
\toprule 
\multicolumn{4}{@{}c@{}}{Meshfree Method} &{ } &\multicolumn{4}{@{}c@{}}{FEM P2 (Adapted from \cite{bogos16})}\\
\cmidrule{1-4} \cmidrule{6-9}
$\tilde{N}_{\partial}$ & $\tilde{N}$ & Relative Error & Convergence && $\tilde{N}_{\partial}$ & Vertices & Relative Error & Convergence\tabularnewline
\cmidrule{1-4} \cmidrule{6-9}

30 & 225 & 6.3316E-03 & N/A && 100 & 922 & 6.4788E-04 & N/A\tabularnewline

40 & 400 & 1.1775E-04 & 13.851 && 200 & 3592 & 7.5038E-05 & 3.110\tabularnewline

50 & 625 & 3.2598E-05 & 5.755 && 400 & 14002 & 1.2373E-05 & 2.600\tabularnewline

60 & 900 & 2.2258E-06 & 14.722 && 800 & 55300 & 2.6978E-06 & 2.197\tabularnewline

70 & 1225 & 5.7334E-07 & 8.799 && 1600 & 220477 & 6.4948E-07 & 2.054\tabularnewline
\bottomrule
\end{tabular}

\end{table}

Given that our method converges super-algebraically in theory,
the much faster observed convergence rate of the meshfree method is
to be expected; we are able to obtain accurate results with far fewer
points. Of course, this is at the cost of having a dense matrix, but
with the added benefit of being completely meshfree.

\subsection{Surface Steklov\label{subsec:Surface-Steklov}} 

A useful aspect of our approach is that the same method applies to
surface PDEs. This is not the case for various other high-order Steklov
eigenvalue approaches that may require a Green's function, which are
often not readily available for surfaces. We also do not require the
surface's metric or any quadrature scheme to be implemented;
only a point cloud and (unoriented) normal vectors on the point cloud
are needed.

We study a catenoid with a “wavy” edge, given by the parametrization:
\[
\sigma\b{s,t}=\b{\cosh\b t\cos\b s,\cosh\b t\sin\b s,t},
\]
for $s\in\left[0,2\pi\right)$ and 
$-1+0.1\sin\b{3s}\le t\le1+0.1\sin\b{3s}$. A point cloud for this surface is shown in Figure \ref{fig:catenoid}. The Steklov problem on this surface is
\begin{align*}
\Delta_{S}u & =0\text{ on }S,\\
\b{\hat{\vc{\nu}}\cdot\nabla_{S}u-\lambda u}\eval_{\partial S} & =0,
\end{align*}
where $\hat{\vc{\nu}}$ is the outward normal to $\partial S$ (perpendicular
to the tangent vector to $\partial S$ and the normal vector $\hat{\vc n}_{S}$
to $S$). This is discretized very similarly to the examples in Subsections
\ref{subsec:Laplace--Beltrami}-\ref{subsec:Steklov-Eigenvalues}:

\begin{align}
\text{minimize, over }u\in\H: & \norm u_{\H}\label{eq:eigsteklapbel},\\
\text{subject to } & \b{-\Delta u+\hat{\vc n}_{S}\cdot\b{D^{2}u}\hat{\vc n}_{S}}\b{\x_{j}}=0\text{, for each \ensuremath{j\in\cb{1,2,\ldots,\tilde{N}}}},\nonumber \\
 & \hat{\vc n}_{S}\cdot\nabla u\b{\vc x_{j}}=0\text{, for each }j\in\cb{1,2,\ldots,\tilde{N}},\nonumber \\
 & \hat{\vc n}_{S}\cdot\nabla u\b{\vc y_{j}}=0\text{, for each }j\in\cb{1,2,\ldots,\tilde{N}_{\partial}},\nonumber \\
 & \b{\hat{\vc{\nu}}\cdot\nabla u\b{\vc y_{j}}-\lambda u\b{\vc y_{j}}}=0\text{, for each }j\in\cb{1,2,\ldots,\tilde{N}_{\partial}},\nonumber \\
 & u\b{\vc a_{j}}=1\text{, for each }j\in\cb{1,2,\ldots,\tilde{N}_{a}}.\nonumber 
\end{align}

We seek to estimate the first non-zero eigenvalue. This is again done
via minimization of $\norm{u_{\lambda}^{\b{\tilde{N}}}}_{\H}^{2}$
as a function of $\lambda$. As it turns out, we see numerically that
the first eigenvalue is close to 0.46, so we initialize higher accuracy
tests starting at 0.46 before using Newton's method to approximately
find the minimum. We use $q=4$, $T=\tilde\ell=5$, and $\tilde{N}_a=1$. 
$\tilde{N}_{\partial}$
is the total number of points on the boundaries, so there are $\tilde{N}_{\partial}/2$
points on each of the two curves of $\partial S$. The eigenvalue
estimates for various $\tilde{N}_{\partial}$ (inversely proportional
to $h_{\text{max}}$) are in Table \ref{tab:stekschro-1}, along with
the change between subsequent tests. We observe convergence to 5 or 6 digits of accuracy with $\tilde{N}_{\partial}=126$. A plot of the eigenfunction for the $\tilde{N}_\partial=90$ test is shown in Figure \ref{fig:catenoid}.

\begin{table}[h]
\caption{\label{tab:stekschro-1}Steklov eigenvalue $\lambda\approx0.46506$
computed via minimization of $\protect\norm{u_{\lambda}^{\protect\b{\tilde{N}}}}_{\protect\H}^{2}$ and the difference between subsequent tests}
\begin{tabular}{@{}lll@{}}
\toprule 
$\tilde{N}_{\partial}$ & $\lambda$ Estimate & Relative Change\tabularnewline
\midrule
 
66 & 0.4651428 & N/A\tabularnewline

78 & 0.4650323 & 2.3756E-04\tabularnewline

90 & 0.4650468 & 3.1173E-05\tabularnewline

102 & 0.4650578 & 2.3630E-05\tabularnewline

114 & 0.4650583 & 9.7921E-07\tabularnewline

126 & 0.4650585 & 4.6817E-07\tabularnewline
\bottomrule
\end{tabular}

\end{table}
\begin{figure}[H]

\begin{centering}
\includegraphics[width=119mm]{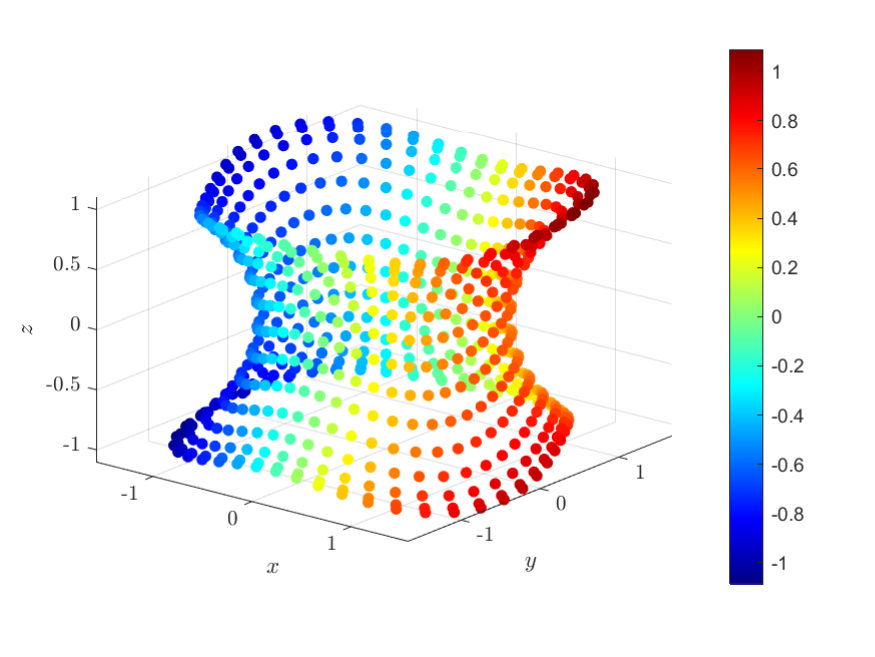}
\par\end{centering}
\caption{\label{fig:catenoid}Computed Steklov eigenfunction corresponding to $\lambda\approx0.465$ on the catenoid with a ``wavy'' boundary using $\tilde{N}_\partial=90$ boundary points}
\end{figure}

\subsection{Multiplicity}

We have seen how Proposition \ref{prop:Let--be} can be used to find
eigenvalues. With standard methods, eigenvalues of differential operators
are estimated as the eigenvalues of a matrix used to discretize the
differential operator. Such approaches can produce repeated eigenvalues,
which often correctly indicate the multiplicities of the eigenvalues
of the differential operator. In our approach, we simply see minima
of a function in $\R$ or $\C$. This leaves the problem of determining
the multiplicity of eigenvalues.

With probability one, if the multiplicity of an eigenvalue $\lambda$
is $n$, we expect to be able to interpolate $n$ random values at
$n$ random points with an eigenfunction with eigenvalue $\lambda$. We
can then consider the problem on a surface $S$:
\begin{align*}
\b{-\Delta_{S}u-\lambda u} & =0\text{ on }S,\\
u\b{\vc a_{j}} & =b_{j}\text{, for each }j\in\cb{1,2,\ldots,\tilde{N}_{a}}.
\end{align*}
Now, if the points $\cb{\vc a_{j}}$ and values $\cb{b_{j}}$ are
selected at random, we expect this to be solvable (with probability
one) only when $\lambda$ is an eigenvalue of $\Delta_{S}$ with multiplicity
at least $\tilde{N}_{a}$. There are a number of ways to use this
fact. On a surface, the discretized version of our problem is given
by (\ref{eq:eiglapbeltdisc}) from earlier. If $u_{\lambda,\tilde{N}_{a}}^{\b{\tilde{N}}}$
now represents the solution to (\ref{eq:eiglapbeltdisc}) with $\tilde{N}_{a}$
randomly selected points $\cb{\vc a_{j}}$ and values $\cb{b_{j}}$,
we expect $\norm{u_{\lambda,\tilde{N}_{a}}^{\b{\tilde{N}}}}_{\H}$
to only remain bounded as $\tilde{N}\to\infty$ when $\lambda$ is
an eigenvalue with multiplicity at least $\tilde{N}_{a}$.
We demonstrate this for the sphere in Fig. \ref{fig:multiplicity sphere},
where $\norm{u_{\lambda,\tilde{N}_{a}}^{\b{\tilde{N}}}}_{\H}$ is
plotted as a function of $\lambda$ for $\tilde{N}_{a}=1,3,$ and
5, using the same parameters as Subsection \ref{subsec:Laplace--Beltrami}.

\begin{figure}[H]
\begin{centering}
\includegraphics[width=119mm]{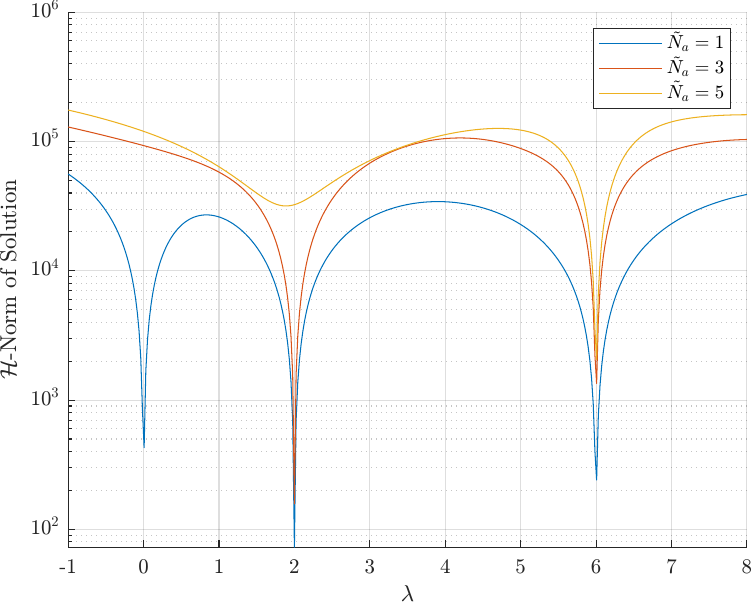}
\par\end{centering}
\caption{\label{fig:multiplicity sphere}$\protect\norm{u_{\lambda,\tilde{N}_{a}}^{\protect\b{\tilde{N}}}}_{\protect\H}$
plotted against $\lambda$ to test for eigenvalue multiplicities,
using $\tilde{N}_{a}=1,3,5$. $\tilde{N}=400,N_{b}=\protect\b{2\cdot15+1}^{3},\Omega=\protect\sb{-2,2}^{3},q=4$, and $T=4$,
with the same $\mathcal{H}$ as Subsection \ref{subsec:Laplace--Beltrami}}
\end{figure}

Visually, we see that there are sharp minima when $\lambda$ is an
eigenvalue \textit{and} it has multiplicity of at least $\tilde{N}_{a}$.
In this example, $\lambda=0$ has multiplicity 1, $\lambda=2$ has
multiplicity $3$, and $\lambda=6$ has multiplicity $5$. However, we
see from the plot that for $\tilde{N}_{a}=5$, there may be another
minimum of $\norm{u_{\lambda,\tilde{N}_{a}}^{\b{\tilde{N}}}}_{\H}$
somewhere between $\lambda=1$ and $\lambda=2$. This minimum does
not look as sharp as the others, however, which motivates us to consider
a more robust test of multiplicity that can correctly distinguish
these two types of local minima. To do this, we use the limits from Equations (\ref{eq:lim1}) and
(\ref{eq:lim2}). These limits are quite useful since they tell us
that the ratio $\norm{u_{\lambda,\tilde{N}_{a}}^{\b{\tilde{N}_{2}}}}_{\H}/\norm{u_{\lambda,\tilde{N}_{a}}^{\b{\tilde{N}_{1}}}}_{\H}$
must go to either 1 or $\infty$, depending on whether the problem
is solvable or not. Once we already have eigenvalue estimates using
$\tilde{N}_{a}=1$, we can check their multiplicity using the norm
ratio. 

As a demonstration, we look at the $\lambda=56$ eigenvalue for the
Laplace--Beltrami operator on the unit sphere, which has multiplicity
15. We give the norm ratio $\norm{u_{\lambda,\tilde{N}_{a}}^{\b{\tilde{N}_{2}}}}_{\H}/\norm{u_{\lambda,\tilde{N}_{a}}^{\b{\tilde{N}_{1}}}}_{\H}$
for $\lambda=56$ and multiplicities 14-17 in Table \ref{tab:Norm-ratio-for}.
We use $\tilde{N}_{2}\approx\frac{10}{9}\tilde{N}_{1}$ and
the same $\H$ as in the previous test. The expected behaviour is observed, and by the last test, the true multiplicity
is fairly clearly indicated. That is, the norm ratio for $\tilde{N}_{a}=15$
is approaching 1, while the norm ratio for $\tilde{N}_{a}=16$ appears
to be diverging; the norm is still nearly doubling with each (fairly
small) refinement. This indicates to us that it is likely possible
to select a cutoff value for the ratio slightly higher than one, then
consider a ratio less than that value to indicate an eigenvalue with
at least multiplicity $\tilde{N}_{a}$.

\begin{table}[h]
\caption{\label{tab:Norm-ratio-for}Norm ratio $\protect\norm{u_{\lambda,\tilde{N}_{a}}^{\protect\b{\tilde{N}_{2}}}}_{\protect\H}/\protect\norm{u_{\lambda,\tilde{N}_{a}}^{\protect\b{\tilde{N}_{1}}}}_{\protect\H}$for
$\lambda=56$ (multiplicity 15) and various $\tilde{N}_{1}$ values, with $\tilde{N}_{2}\approx\frac{10}{9}\tilde{N}$}

\begin{tabular}{@{}llllll@{}}
\toprule 

\diagbox{$\tilde{N}_{1}$}{$\tilde{N}_a$} && 14 & 15 & 16 & 17\\

\midrule 
400 && 1.0112 & 1.0496 & 1.0897 & 1.4007\tabularnewline

500 && 1.0026 & 1.0628 & 1.2488 & 1.3880\tabularnewline

600 && 1.0005 & 1.0350 & 1.7106 & 1.6857\tabularnewline

700 && 1.0001 & 1.0073 & 1.6638 & 1.7111\tabularnewline

800 && 1.0000 & 1.0026 & 1.9056 & 2.1866\tabularnewline
\bottomrule
\end{tabular}

\end{table}

Using $\tilde{N}_{2}\approx\frac{10}{9}\tilde{N}_{1}$, we also test
the multiplicity of the $\lambda=156$ eigenvalue in Table \ref{tab:Norm-ratio-for-1}. $\tilde{N}_{1}=900$ or $1000$ offers sufficient resolution for us to observe that the ratio is approaching 1 for $\tilde{N}_{a}=25$,
but increasing for $\tilde{N}_{a}=26$. Note that this test covers
the 625th eigenvalue, including multiplicity.

\begin{table}[h]
\caption{\label{tab:Norm-ratio-for-1}Norm ratio $\protect\norm{u_{\lambda,\tilde{N}_{a}}^{\protect\b{\tilde{N}_{2}}}}_{\protect\H}/\protect\norm{u_{\lambda,\tilde{N}_{a}}^{\protect\b{\tilde{N}_{1}}}}_{\protect\H}$for
$\lambda=156$ (multiplicity 25) and various $\tilde{N}_{1}$ values, with $\tilde{N}_{2}\approx\frac{10}{9}\tilde{N}$}
\begin{tabular}{@{}lllll@{}}
\toprule 
 \diagbox{$\tilde{N}_{1}$}{$\tilde{N}_a$} 
& 24 & 25 & 26 & 27 \\
\midrule

400 & 1.1397 & 1.1473 & 1.1501 & 1.1509\tabularnewline

500 & 1.2410 & 1.2385 & 1.2933 & 1.3034\tabularnewline
 
600 & 1.2108 & 1.2349 & 1.3843 & 1.3566\tabularnewline
 
700 & 1.1598 & 1.2182 & 1.5749 & 1.5796\tabularnewline

800 & 1.0979 & 1.2434 & 1.8705 & 1.8795\tabularnewline

900 & 1.0277 & 1.0811 & 2.0428 & 2.1348\tabularnewline

1000 & 1.0074 & 1.0232 & 2.2747 & 2.2875\tabularnewline

\bottomrule
\end{tabular}

\end{table}

\section{Conclusions} \label{sec:conclude}

We presented a very general result (Proposition \ref{prop:Let--be})
detailing how the boundedness of Hermite--Birkhoff interpolants in certain
Hilbert spaces is necessary and sufficient for solutions to linear
PDEs to exist in a very general context, and we explained how this could
be used to determine the solvability of linear PDEs. In Propositions
\ref{prop:If-all-assumptions} and \ref{prop:If-all-assumptions-1}, we proved inequalities that show the high-order convergence
of our approach for estimating eigenvalues and eigenfunctions. Then,
we tested our method numerically for a variety of problems and observed the
rapid convergence of eigenvalue estimates. Notably, we are able to
handle surface PDEs, irregular domains with holes, problems with varying coefficients, and Steklov problems all with
the same approach.%

Our method has certain advantages; we see its universality for linear
PDEs as its primary advantage, as well as its meshfree nature. Propositions
\ref{prop:Let--be}, \ref{prop:If-all-assumptions}, and \ref{prop:If-all-assumptions-1}
show analytically that the method produces correct eigenvalues with
no spurious modes, and show that the method converges at a high-order
rate for suitable problems. We also observe that our method is numerically
reliable for producing correct eigenvalues at high enough point densities,
and that estimates converge extremely quickly. This differs from other
high-order, meshfree methods that have been used for solving PDEs,
but lack analytical convergence results for eigenvalue estimates and
do not reliably produce the correct eigenvalues without spurious modes
in practice. Meshfree methods are highly desirable for surface PDEs due to the difficulty of mesh creation. It is generally much easier
to produce a point cloud than a mesh for irregularly shaped flat domains and surfaces. In practice, surfaces may also be defined by a point cloud originating from a scan of an object, which requires a large amount of pre-processing to mesh.

Extensions of this work may focus on scaling up the method to solve larger problems.
There are Hilbert spaces with useful basis functions properties (compact
support, separable, etc.) that can be used to greatly decrease computational
costs. In other work, we are investigating Hilbert spaces that produce
$\psi_{j}$ functions that vary by location, which may help substantially
in capturing fine details where necessary without substantially increasing
the point density for the whole domain. This may help with a key weakness
of standard global RBF methods, where narrower basis functions cannot
be used without high point densities for the entire domain. We are also 
exploring using Proposition \ref{prop:Let--be} for other problems regarding 
PDE solvability, such as inverse problems, and using known PDE solvability conditions that depend on integrals to develop accurate meshfree integration techniques on surfaces.

\begin{appendices}
\section{Point Cloud Generation Algorithms}
\subsection{Boundary Algorithm for Curves and Surfaces}\label{alg:boundary}
Let $\tilde{N}_{\partial}$ be the desired number of points on the boundary $\partial U$ of an open set $U$.
\begin{enumerate}
\item Start with an empty point cloud.
\item Create $\tilde{N}_{\text{test},\partial}$ ($>\tilde{N}_{\partial}$ for Version A) points
on the boundary: $\cb{\vc z_{j}}_{j=1}^{\tilde{N}_{\text{test},\partial}}$.
If the boundary is defined by a level set, this can be achieved by placing points
in a larger set containing $\partial U$, then using Newton's method
or another root-finding algorithm to move the points onto $\partial U$. That is, if $\partial U:=\cb{\x\in\R^m:\vp\b\x=0}$, we can initialize a root-finding algorithm at a point $\tilde{\vc z}_j$ near $\partial U$ to find some $\vc z_j\in\partial U$ such that $\vp \b{\vc z_j}=0$.
\item If the boundary point cloud has $k$ points, $\cb{\vc y_{j}}_{j=1}^{k}$,
choose $\vc y_{k+1}$ to be the point in $\cb{\vc z_{j}}_{j=1}^{\tilde{N}_{\text{test},\partial}}$
farthest away from $\cb{\vc y_{j}}_{j=1}^{k}$ (simply using the Euclidean
norm in the embedding space: $\norm{\cdot}_{2}$).
\item Repeat the previous step (Version A) or repeat steps 2 and 3 (Version B) until there are $\tilde{N}_{\partial}$ points
in the point cloud: $\cb{\vc y_{j}}_{j=1}^{\tilde{N}_{\partial}}$.
\end{enumerate}
\subsection{Interior Algorithm}\label{alg:interior}
Let $U\subset\R^m$ be open and defined by a level set $U:=\cb{\x\in\R^{m}:\vp\b{\vc x}<0}$, and let
the minimum value of $\vp$ on $\overline{U}$ be $a$.
\begin{enumerate}
\item Start with an empty point cloud.
\item Create $\tilde{N}_{\text{test}}$ points in $U$: $\cb{\vc z_{j}}_{j=1}^{\tilde{N}_{\text{test}}}$.
\item If the point cloud currently has $k$ points, $\cb{\vc x_{j}}_{j=1}^{k}$,
choose $\vc x_{j}$ to be the point in $\cb{\vc z_{j}}_{j=1}^{\tilde{N}_{\text{test}}}$
with the largest penalty, where the penalty is defined by %
\[
P_{k}\b{\vc z}=\b{w\b{1-\frac{\vp\b{\vc z}}{a}}+1}\min\cb{\norm{\x-\vc z}_{2}^{2}:\vc x\in\cb{\vc x_{j}}_{j=1}^{k}\cup\cb{\vc y_{j}}_{j=1}^{\tilde{N}_{\partial}}},
\]
where $w$ is a parameter that controls preference for placing points
near the boundary.
\item Repeat steps 2 and 3 until the point cloud has $\tilde{N}$ points.
\end{enumerate}

\end{appendices}

\subsection*{Acknowledgements}
We acknowledge the support of the Natural Sciences and Engineering Research Council of Canada (NSERC), [funding reference number RGPIN 2022-03302].

\printbibliography

\end{document}